\newcommand{\Z}{{\mathbb Z}}
\theoremstyle{plain} \numberwithin{equation}{section}
\newtheorem{thm}{Theorem}[section]
\newtheorem{theorem}[thm]{Theorem}
\newtheorem{lemma}[thm]{Lemma}
\newtheorem{corollary}[thm]{Corollary}
\newtheorem{definition}[thm]{Definition}
\newtheorem{proposition}[thm]{Proposition}
\begin{document}
\setcounter{page}{1}

\title[ Padhan, Hasan and Pradhan]{On the dimension of non-abelian tensor square of Lie superalgebras}

\author[Padhan]{Rudra Narayan Padhan}
\address{Centre for Data Science, Institute of Technical Education and Research  \\
	Siksha `O' Anusandhan (A Deemed to be University)\\
	Bhubaneswar-751030 \\
	Odisha, India}
\email{rudra.padhan6@gmail.com, rudranarayanpadhan@soa.ac.in}

	\author{IBRAHEM YAKZAN HASAN}
	\address{Centre for Applied Mathematics and Computing, Institute of Technical Education and Research  \\
	Siksha `O' Anusandhan (A Deemed to be University)\\
	Bhubaneswar-751030 \\
	Odisha, India}
\email{ibrahemhasan898@gmail.com}

\author[Padhan]{Sushree Sangeeta Pradhan}
\address{Department of Mathematics  \\
	C. V. Raman Global University\\
	Bhubaneswar-752054 \\
	Odisha, India}
\email{sushreesp1992@gmail.com}	

\subjclass[2020]{Primary 17B10; Secondary 17B01.}
\keywords{Heisenberg Lie superalgebra; Schur multiplier ; non-abelian tensor and exterior product}
\maketitle

\begin{abstract}
In this paper, we determine upper bound for the non-abelian tensor product of finite dimensional Lie superalgebra. More precisely, if $L$ is a non-abelian nilpotent Lie superalgebra of dimension $(k \mid l)$ and its derived subalgebra has dimension $(r \mid s)$, then $ \dim (L\otimes L) \leq (k+l-(r+s))(k+l-1)+2$. We discuss the conditions when the equality holds for $r=1, s=0$ explicitly.
\end{abstract}

\section{Introduction}


In 1991, Rocco \cite{Rocco} proved that if $G$ is a finite $p$-group of order $P^n$ with derived subgroup of order $p^m$, then $G\otimes G \leq p^{n(n-m)}$. Further this bound was imporved by Niroomand \cite{p2010}, i.e., $G\otimes G \leq p^{n(n-m)}$. Ellis \cite{Ellis1987,Ellis1991} developed the theory of tensor product for Lie algebra. If $L$ and $K$ are two finite dimensional nilpotent Lie algebras, then the upper bound and lower bound of the dimension of the non-abelian tensor product $L\otimes K$ have been stuided by salemkar et. al \cite{s2010} and the results are generalization of Rocco's results for finite $p$-group.  Recently, an improved upper bound on the dimension of  $L\otimes L$ has been discussed in \cite{Niroomand}, explicitly, if $L$ is a non-abelian nilpotent Lie algebra of dimension $n$ and its derived subalgebra has dimension $m$, then $ \dim (L\otimes L) \leq (n-m)(n-1)+2$.

\smallskip 

Lie superalgebras have applications in many areas of Mathematics and Theoretical Physics as they can be used to describe supersymmetry. Kac \cite{Kac1977} gives a comprehensive description of mathematical theory of Lie superalgebras, and establishes the classification of all finite dimensional simple Lie superalgebras over an algebraically closed field of characteristic zero. In the last few years, the theory of Lie superalgebras has evolved remarkably, obtaining many results in representation theory and classification. Most of the results are extension of well known facts of Lie algebras \cite{MC2011, Musson2012, GKL2015}.  Recently, Garc\'{i}a-Mart\'{i}nez \cite{GKL2015} introduced the notions of non-abelian tensor product of Lie superalgebras and exterior product of Lie superalgebras over a commutative ring. In this paper we determine an upper bound on the dimension of non-abelian tensor product of nilpotent Lie superalgebra.

\section{Preliminaries and Auxiliary Results}

Let $\mathbb{Z}_{2}=\{\bar{0}, \bar{1}\}$ be a field. A $\mathbb{Z}_{2}$-graded vector space $V$ is simply a direct sum of vector spaces $V_{\bar{0}}$ and $V_{\bar{1}}$, i.e., $V = V_{\bar{0}} \oplus V_{\bar{1}}$. It is also referred as a superspace. We consider all vector superspaces and superalgebras are over field $\mathbb{F}$ (characteristic of $\mathbb{F} \neq 2,3$). Elements in $V_{\bar{0}}$ (resp. $V_{\bar{1}}$) are called even (resp. odd) elements. Non-zero elements of $V_{\bar{0}} \cup V_{\bar{1}}$ are called homogeneous elements. For a homogeneous element $v \in V_{\sigma}$, with $\sigma \in \mathbb{Z}_{2}$ we set $|v| = \sigma$ as the degree of $v$. A  subsuperspace (or, subspace)  $U$ of $V$ is a $\mathbb{Z}_2$-graded vector subspace where  $U= (V_{\bar{0}} \cap U) \oplus (V_{\bar{1}} \cap U)$. We adopt the convention that whenever the degree function appears in a formula, the corresponding elements are supposed to be homogeneous. 

\smallskip 

A  Lie superalgebra (see \cite{Kac1977, Musson2012}) is a superspace $L = L_{\bar{0}} \oplus L_{\bar{1}}$ with a bilinear mapping $ [., .] : L \times L \rightarrow L$ satisfying the following identities:
\begin{enumerate}
\item $[L_{\alpha}, L_{\beta}] \subset L_{\alpha+\beta}$, for $\alpha, \beta \in \mathbb{Z}_{2}$ ($\mathbb{Z}_{2}$-grading),
\item $[x, y] = -(-1)^{|x||y|} [y, x]$ (graded skew-symmetry),
\item $(-1)^{|x||z|} [x,[y, z]] + (-1)^{ |y| |x|} [y, [z, x]] + (-1)^{|z| |y|}[z,[ x, y]] = 0$ (graded Jacobi identity),
\end{enumerate}
for all $x, y, z \in L$. Clearly $L_{\bar{0}}$ is a Lie algebra, and $L_{\bar{1}}$ is a $L_{\bar{0}}$-module. If $L_{\bar{1}} = 0$, then $L$ is just Lie algebra, but in general a Lie superalgebra is not a Lie algebra.  A Lie superalgebra $L$, is called abelian if  $[x, y] = 0$ for all $x, y \in L$. Lie superalgebras without even part, i.e., $L_{\bar{0}} = 0$, are  abelian. A subsuperalgebra (or subalgebra) of $L$ is a $\mathbb{Z}_{2}$-graded vector subspace which is closed under bracket operation. The graded subalgebra $[L, L]$,  of $L$  is known as the derived subalgebra of $L$. A $\mathbb{Z}_{2}$-graded subspace $I$ is a graded ideal of $L$ if $[I, L]\subseteq I$. The ideal 
\[Z(L) = \{z\in L : [z, x] = 0\;\mbox{for all}\;x\in L\}\] 
is a graded ideal and it is called the {\it center} of $L$. A homomorphism between superspaces $f: V \rightarrow W $ of degree $|f|\in \mathbb{Z}_{2}$, is a linear map satisfying $f(V_{\alpha})\subseteq W_{\alpha+|f|}$ for $\alpha \in \mathbb{Z}_{2}$. In particular, if $|f| = \bar{0}$, then the homomorphism $f$ is called homogeneous linear map of even degree. A Lie superalgebra homomorphism $f: L \rightarrow M$ is a  homogeneous linear map of even degree such that $f([x,y]) = [f(x), f(y)]$ holds for all $x, y \in L$.  If $I$ is an ideal of $L$, the quotient Lie superalgebra $L/I$ inherits a canonical Lie superalgebra structure such that the natural projection map becomes a homomorphism. The notions of {\it epimorphisms, isomorphisms} and {\it automorphisms} have the obvious meaning. 
\smallskip

Throughout this article, for superdimension of Lie superalgebra $L$ we simply write $\dim L=(m\mid n)$, where $\dim L_{\bar{0}} = m$ and $\dim L_{\bar{1}} = n$. Also  $A(m \mid n)$ denotes an abelian Lie superalgebra where $\dim A=(m\mid n)$. A  Lie superalgebra $L$ is said to be Heisenberg Lie superalgebra if $Z(L)=L'$ and $\dim Z(L)=1$. According to the homogeneous generator of $Z(L)$, Heisenberg Lie superalgebras can further split into even or odd Heisenberg Lie superalgebras \cite{MC2011}. By Heisenberg Lie superalgebra we mean special Heisenberg Lie superalgebra in this article, for more details on Heisenberg Lie superalgebras and its multiplier see \cite{Nayak2018,SN2018b,Padhandetec,p50, p54,p55,hesam}. Now we list some useful results from \cite{Nayak2018}, for further use.


\begin{theorem} \label{th3.4}\cite[See Theorem 4.2, 4.3]{Nayak2018}
	Every Heisenberg Lie superalgebra with even center has dimension $(2m+1 \mid n)$ and  is isomorphic to $H(m , n)=H_{\overline{0}}\oplus H_{\overline{1}}$, where
	\[H_{\overline{0}}=<x_{1},\ldots,x_{2m},z \mid [x_{i},x_{m+i}]=z,\ i=1,\ldots,m>\]
	and 
	\[H_{\overline{1}}=<y_{1},\ldots,y_{n}\mid [y_{j}, y_{j}]=z,\  j=1,\ldots,n>.\]
	Further,
	$$
	\dim \mathcal{M}(H(m , n))=
	\begin{cases}
		(2m^{2}-m+n(n+1)/2-1 \mid 2mn)\quad \mbox{if}\;m+n\geq 2\\
		(0 \mid 0) \quad \mbox{if}\;m=0, n=1\\  
		(2 \mid 0)\quad \mbox{if}\;m=1, n=0.
	\end{cases}
	$$
\end{theorem}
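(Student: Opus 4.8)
The statement splits into an isomorphism-type classification and a dimension computation for the multiplier, and I would attack these separately.

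\emph{Classification.} The plan is to encode the whole bracket in a single bilinear form. Let $L$ be Heisenberg with $Z(L)=L'=\langle z\rangle$ and $|z|=\bar 0$. Since $L'=Z(L)$, every bracket lands in $\langle z\rangle$, so the bracket descends to a homogeneous bilinear map $\beta\colon L/Z(L)\times L/Z(L)\to \langle z\rangle$ defined by $[\bar u,\bar v]=\beta(\bar u,\bar v)\,z$. Reading the graded skew-symmetry on homogeneous arguments, $\beta$ is alternating on the even block, symmetric on the odd block, and identically zero on mixed even--odd pairs, since an even--odd bracket is odd while $z$ is even. Because $Z(L)=\langle z\rangle$, the radical of $\beta$ is trivial, so each block is nondegenerate separately. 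A nondegenerate alternating form exists only in even dimension, which forces $\dim L_{\bar 0}=2m+1$ and, in a symplectic basis, yields $[x_i,x_{m+i}]=z$; a nondegenerate symmetric form on the odd block diagonalizes (using $\mathrm{char}\,\mathbb{F}\neq 2$ and, for the normalization to $1$, that the ground field supplies the needed square roots), giving $[y_j,y_j]=z$. Assembling the two normal forms produces the required isomorphism with $H(m,n)$.

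\emph{Multiplier, setup.} For the second part I would compute through a free presentation $0\to R\to F\to H(m,n)\to 0$, with $F$ the free Lie superalgebra on the $(2m\mid n)$-dimensional homogeneous space $V=\langle x_1,\dots,x_{2m}\rangle\oplus\langle y_1,\dots,y_n\rangle$, and use the Hopf-type formula $\mathcal{M}(H(m,n))\cong (R\cap F^2)/[F,R]$, where $F^2=[F,F]$. Two facts frame the computation: the map $F\to H(m,n)$ sends generators to generators, so $R\subseteq F^2$; and nilpotency of class two gives $F^3\subseteq R$. The degree-two layer is
\[
F^2/F^3\;\cong\;\Lambda^2_s V,\qquad (\Lambda^2_s V)_{\bar 0}=\Lambda^2 V_{\bar 0}\oplus S^2 V_{\bar 1},\quad (\Lambda^2_s V)_{\bar 1}=V_{\bar 0}\otimes V_{\bar 1},
\]
so that $\dim(F^2/F^3)=\big(\binom{2m}{2}+\binom{n+1}{2}\mid 2mn\big)=\big(2m^2-m+n(n+1)/2\mid 2mn\big)$. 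The bracket induces a surjection $F^2/F^3\to H'=\langle z\rangle$ whose image is one-dimensional and even (spanned by the classes of the $x_i\wedge x_{m+i}$ and $y_j\cdot y_j$), so its kernel has dimension $\big(2m^2-m+n(n+1)/2-1\mid 2mn\big)$.

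\emph{Multiplier, conclusion and the main difficulty.} It remains to show that for $m+n\ge 2$ the multiplier is exactly this degree-two kernel, i.e. that $F^{\ge 3}$ collapses: $F^3\subseteq [F,R]$ (hence $[F,R]=F^3$, since $R\subseteq F^2$ already gives $[F,R]\subseteq F^3$), whence $\mathcal{M}(H(m,n))\cong (R\cap F^2)/F^3=\ker(F^2/F^3\to\langle z\rangle)$ and the formula follows. The hard part is precisely this collapse, which I would carry out in $F/[F,R]$, where $R$ becomes central: writing $F^3=[V,F^2]$ modulo $F^4$ and splitting $F^2$ into its part in $R$, its $F^3$-part, and the one-dimensional $z$-direction $w=[x_1,x_{m+1}]$, one reduces everything to showing $[V,w]\subseteq [F,R]+F^4$. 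When $m+n\ge 2$ there genuinely are degree-two relations in $R$ (e.g. $[x_1,x_{m+1}]-[x_2,x_{m+2}]$ or $[x_1,x_{m+1}]-[y_1,y_1]$, together with all the vanishing mixed brackets), and bracketing these with $V$ and applying the graded Jacobi identity rewrites each $[v,w]$ in terms of $[F,R]$; tracking the Koszul signs here is the delicate bookkeeping I expect to consume most of the work. This step fails exactly when $m+n=1$, because then there is no second way to produce $z$ and hence no usable degree-two relation, so the two exceptional algebras must be computed by hand: for $H(1,0)$, the classical three-dimensional Heisenberg Lie algebra, $F$ is free nilpotent of class two on two generators, so $R=F^3$ and $\mathcal{M}\cong F^3/F^4$, of dimension $(2\mid 0)$ by a direct count; for $H(0,1)$ the graded Jacobi identity forces $[y_1,[y_1,y_1]]=0$ identically (this is where $\mathrm{char}\,\mathbb{F}\neq 3$ is used), so the free superalgebra on one odd generator is already class-two nilpotent, $R=0$, and $\mathcal{M}=(0\mid 0)$. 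As a sanity check on the generic formula, it equals $\dim\mathcal{M}\big(A(2m\mid n)\big)-(1\mid 0)$, reflecting the central extension $0\to\langle z\rangle\to H(m,n)\to A(2m\mid n)\to 0$.
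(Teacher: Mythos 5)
This statement is imported verbatim from \cite{Nayak2018} (the paper offers no proof of it, only the citation, with the structure theory going back to \cite{MC2011}), so there is no internal proof to compare against; your reconstruction must be judged against the cited literature, and it follows essentially the same route found there. Your classification step --- the bracket descends to a form $\beta$ on $L/Z(L)$ that vanishes on mixed even--odd pairs (since $z$ is even), is nondegenerate alternating on the even block and nondegenerate symmetric on the odd block, then take symplectic and orthonormal normal forms --- is exactly the argument of \cite{MC2011}; your multiplier computation via a free presentation $0\to R\to F\to H(m,n)\to 0$, the Hopf formula $\mathcal{M}\cong (R\cap F^{2})/[F,R]$ with $R\subseteq F^{2}$, and the collapse of $F^{3}$ is how these multipliers are computed in \cite{Nayak2018}. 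Two points you should tighten. First, normalizing the diagonal entries to $[y_{j},y_{j}]=z$ requires square roots in $\mathbb{F}$ (you flag this correctly): the cited classification is over an algebraically closed field of characteristic zero, whereas this paper assumes only $\operatorname{char}\mathbb{F}\neq 2,3$, so the hypothesis should be stated. Second, your Jacobi-identity bookkeeping only yields $F^{3}\subseteq [F,R]+F^{4}$, and the passage to $F^{3}\subseteq [F,R]$ should be made explicit rather than implicit: in $E=F/[F,R]$ the image $\bar{R}$ of $R$ is central and $E/\bar{R}\cong H(m,n)$ has class $2$, so $E^{3}\subseteq \bar{R}\subseteq Z(E)$, hence $E^{4}=[E,E^{3}]=0$, and combined with $E^{3}\subseteq E^{4}$ this forces $E^{3}=0$, i.e.\ $F^{3}\subseteq [F,R]$ and $[F,R]=F^{3}$. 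With that one-line repair the generic count $\bigl(2m^{2}-m+n(n+1)/2-1 \mid 2mn\bigr)$ follows as you state, and your exceptional cases are handled correctly: $H(1,0)$ is free nilpotent of class $2$ on two even generators, so $R=F^{3}$ and $\mathcal{M}\cong F^{3}/F^{4}$ has dimension $(2\mid 0)$, while for $H(0,1)$ the graded Jacobi identity gives $-3[y,[y,y]]=0$, so (using $\operatorname{char}\mathbb{F}\neq 3$) the free Lie superalgebra on one odd generator already equals $H(0,1)$, $R=0$, and $\mathcal{M}=(0\mid 0)$. Your sanity check against $\mathcal{M}(A(2m\mid n))$ is a nice touch consistent with Lemma \ref{lem41} and Proposition \ref{th3.3}.
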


The following is the established result for the multiplier and cover of Heisenberg Lie superalgebra of odd center.

\begin{theorem}\label{th3.6}\cite[See Theorem 2.8]{SN2018b}
	Every Heisenberg Lie superalgebra, with odd center has dimension $(m \mid m+1)$, is isomorphic to $H_{m}=H_{\overline{0}}\oplus H_{\overline{1}}$, where
	\[H_{m}=<x_{1},\ldots,x_{m} , y_{1},\ldots,y_{m},z \mid [x_{j},y_{j}]=z,   j=1,\ldots,m>.\]
	Further,
	$$
	\dim \mathcal{M}(H_{m})=
	\begin{cases}
		(m^{2}\mid m^{2}-1)\quad \mbox{if}\;m\geq 2\\  
		(1\mid 1) \quad \quad \mbox{if}\;m=1.  
	\end{cases}
	$$
\end{theorem}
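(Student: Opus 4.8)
The plan is to prove the statement in two stages: first the structural classification, then the dimension of $\mathcal{M}(H_m)$. For the classification, I would start from the defining conditions of a Heisenberg Lie superalgebra with odd centre, $Z(L)=L'=\langle z\rangle$ with $|z|=\bar 1$, so $\dim L'=(0\mid 1)$. Since every bracket lands in the one–dimensional space $\langle z\rangle\subseteq L_{\bar 1}$, the grading rule $[L_\alpha,L_\beta]\subseteq L_{\alpha+\beta}$ forces $[L_{\bar 0},L_{\bar 0}]=0$ and $[L_{\bar 1},L_{\bar 1}]=0$, leaving only the pairing $B\colon L_{\bar 0}\times L_{\bar 1}\to\mathbb{F}$ defined by $[a,b]=B(a,b)\,z$. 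The centre conditions translate into: the left kernel of $B$ is $Z(L)_{\bar 0}=0$ and the right kernel is $Z(L)_{\bar 1}=\langle z\rangle$. Comparing ranks gives $\dim L_{\bar 0}=\operatorname{rank}B=\dim L_{\bar 1}-1$, hence $\dim L=(m\mid m+1)$; choosing a basis $x_1,\dots,x_m$ of $L_{\bar 0}$ together with the $B$-dual basis $y_1,\dots,y_m$ of $L_{\bar 1}/\langle z\rangle$ yields $[x_i,y_j]=\delta_{ij}z$ and all other brackets zero, which is exactly the presentation of $H_m$; the graded Jacobi identity holds automatically because $L'$ is central.

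For the multiplier I would use the defining exact sequence of the exterior product of Lie superalgebras from \cite{GKL2015}, namely $0\to\mathcal{M}(L)\to L\wedge L\xrightarrow{\kappa} L\to L^{ab}\to 0$ with $\operatorname{im}\kappa=L'$, so that $\dim\mathcal{M}(H_m)=\dim(H_m\wedge H_m)-\dim H_m'=\dim(H_m\wedge H_m)-(0\mid 1)$. The computation of $H_m\wedge H_m$ then reduces to the abelian quotient $A:=H_m/Z\cong A(m\mid m)$. Since $A$ is abelian, $\mathcal{M}(A)=A\wedge A$ splits as an even part $\Lambda^2 A_{\bar 0}\oplus S^2 A_{\bar 1}$ and an odd part $A_{\bar 0}\otimes A_{\bar 1}$, of dimensions $\binom{m}{2}+\binom{m+1}{2}=m^{2}$ and $m\cdot m=m^{2}$; thus $\dim\mathcal{M}(A(m\mid m))=(m^{2}\mid m^{2})$.

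Next I would feed the central extension $0\to Z\to H_m\to A\to 0$ into the right-exact Ganea sequence $Z\otimes A\to H_m\wedge H_m\to A\wedge A\to 0$, so that $H_m\wedge H_m$ differs from $A\wedge A$ only by the images of the new generators $z\wedge x_i$ and $z\wedge y_i$. Using graded skew-symmetry together with the crossed-module relation $[a,b]\wedge c=a\wedge[b,c]-(-1)^{|a||b|}b\wedge[a,c]$, I would show $z\wedge z=0$ and $z\wedge y_k=0$ for all $k$ and all $m$ (taking $a=x_k,b=y_k,c=y_k$ and using $\operatorname{char}\mathbb{F}\neq 2$), while $z\wedge x_k=0$ whenever a second index $i\neq k$ is available, i.e.\ for $m\ge 2$; for $m=1$ the element $z\wedge x_1$ is left unconstrained by these relations. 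Hence the new part is $(0\mid 0)$ for $m\ge 2$ and $(0\mid 1)$ for $m=1$, giving $\dim(H_m\wedge H_m)=(m^{2}\mid m^{2})$ for $m\ge 2$ and $(1\mid 2)$ for $m=1$; subtracting $\dim H_m'=(0\mid 1)$ produces $(m^{2}\mid m^{2}-1)$ and $(1\mid 1)$ respectively.

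The hard part will be the lower bound in the exceptional case $m=1$: the relation calculus only shows that $z\wedge x_1$ is \emph{not forced} to vanish, and to conclude $\dim(H_1\wedge H_1)=(1\mid 2)$ I must verify that it is genuinely nonzero, for instance by exhibiting a stem cover of $H_1$ of the predicted dimension or a nonzero $2$-cocycle realising it. More generally, I must check exactness of $Z\otimes A\to H_m\wedge H_m\to A\wedge A\to 0$, i.e.\ that the kernel of the second map is precisely the span of the $z\wedge(-)$, since the entire dimension count rests on it; this, and the separate treatment of $m=1$, is where the care is needed. An alternative that sidesteps the exterior-square bookkeeping is a direct Hopf-formula computation from a free presentation $F/R$ of $H_m$ on the $2m$ generators $x_i,y_i$, evaluating $(R\cap F')/[F,R]$ by weight, but it requires the same nonvanishing input to secure the lower bound.
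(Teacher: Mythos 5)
The paper does not actually prove Theorem \ref{th3.6}: it is imported verbatim, with citation, from \cite[Theorem 2.8]{SN2018b}, so there is no internal proof to compare against. Judged on its own merits, your plan is essentially sound and would constitute a genuine proof. The classification step is correct: with $Z(L)=L'=\langle z\rangle$ and $|z|=\bar 1$, the grading kills $[L_{\bar 0},L_{\bar 0}]$ and $[L_{\bar 1},L_{\bar 1}]$ (both would land in $L_{\bar 0}\cap\langle z\rangle=0$), the induced pairing $L_{\bar 0}\times L_{\bar 1}/\langle z\rangle\to\mathbb{F}$ is nondegenerate by the centre conditions, and dual bases give exactly the presentation of $H_m$. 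Your dimension bookkeeping also checks out: $\mathcal{M}(A(m\mid m))=A\wedge A$ has even part $\Lambda^2A_{\bar 0}\oplus S^2A_{\bar 1}$ of dimension $\binom{m}{2}+\binom{m+1}{2}=m^2$ and odd part $A_{\bar 0}\otimes A_{\bar 1}$ of dimension $m^2$, and your relation calculus for $z\wedge x_j$, $z\wedge y_j$ (including the $\operatorname{char}\neq 2$ trick for $z\wedge y_k$ and the failure of any constraint on $z\wedge x_1$ when $m=1$) is correct.

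Two inputs you flag as outstanding do need to be supplied, and both are available. First, the exactness of $Z\otimes L\to L\wedge L\to (L/Z)\wedge(L/Z)\to 0$: the tensor version is precisely Proposition \ref{propp3.2} of this paper (from \cite[Proposition 3.8]{GKL2015}), and the exterior version follows by passing to the quotients by the $\square$ submodules, since the $\square$ relations are compatible with the projection. Second, and this is the only substantive hole, the nonvanishing of $z\wedge x_1$ in $H_1\wedge H_1$. This closes in two lines: take the nilpotent Lie superalgebra $E$ with $E_{\bar 0}=\langle x\rangle$, $E_{\bar 1}=\langle y,z,w\rangle$, $[x,y]=z$, $[x,z]=w$, all other brackets zero, viewed as a central extension $0\to\langle w\rangle\to E\to H_1\to 0$. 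For any central extension with section $s$, the map $a\wedge b\mapsto [s(a),s(b)]$ is a well-defined homomorphism on the exterior square (the $\otimes$ relations hold by the Jacobi identity in $E$ modulo central terms, and the $\square$ relations map to zero), and it sends $z\wedge x_1$ to $[z,x]=-w\neq 0$. This yields $\dim(H_1\wedge H_1)=(1\mid 2)$, hence $\dim\mathcal{M}(H_1)=(1\mid 1)$, completing your argument without recourse to \cite{SN2018b}. Note also that your use of $\mathcal{M}(L)\cong\ker(L\wedge L\to L)$ is consistent with how the paper itself silently uses this identification in its Proposition 4.4.
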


 \section{ Non-abelian tensor products}
Here we recall some of the known notation and results from \cite{GKL2015}. Let $P$ and $M$ be two Lie superalgebras, then by an action of $P$ on $M$ we mean a $\mathbb{K}$-bilinear map of even degree $P\times M \longrightarrow M,$ \[(p, m)\mapsto {}^ pm, \]
such that 
\begin{enumerate}
\item ${}^{[p , p']} m = {}^p({}^{p'}m)-(-1)^{|p||p'|}~  {}^{p'}({}^{p}m),$
\item ${}^p{[m , m']}=[{}^pm, m']+(-1)^{|p||m|}[m , {}^pm'],$
\end{enumerate}
for all $p, ~p' \in P$ and $m, ~m' \in M$. For any Lie superalgebra $M$, the Lie multiplication induces an action on itself via ${}^mm'=[m , m']$.  The action of $P$ on $M$ is called trivial if ${}^pm=0$ for all $p \in P$ and $m \in M$. \\
 Given two Lie superalgebras $M$ and $P$ with action of $P$ on $M$, we define the semidirect product $M \rtimes P$ with underlying supermodule $M \oplus P$ endowed with the bracket given by
 $[(m, p), (m', p')]=([m, m']+ {}^pm'-(-1)^{|m||p'|}({}^{p'}m), [p, p'])$. A crossed module of Lie superalgebras is a homomorphism of Lie superalgebras $\partial : M\longrightarrow P$ with an action of $P$ on $M$ satisfying 
 \begin{enumerate}
 	\item $\partial ({}^p{m})=[p,\partial(m)],$
 	
 	\item ${}^{\partial(m)}{m'}=[m, m'],$ for all $p \in P$ and $m, m' \in M$.
 	
 \end{enumerate} 

 	A bilinear function $f:M\times N\longrightarrow T$ is called Lie superpairing if the following relations are satidfied:
 	\begin{enumerate}
 		\item $f([m , m'], n)= f(m, {}^ {m'}{n}) -(-1)^{|m||m'|}f(m', {}^{m}{n})$,
 		\item  $f(m, [n,n'])=(-1)^{|n'|(|m|+|n|)}f({}^{n'}{m}, n)-(-1)^{|m||n|}f({}^{n}{m}, n')$,
 		\item $f({}^{n}{m} , {}^{m'}{n'})=-(-1)^{|m||n|}[f(m, n),f(m', n')]),$  
 	\end{enumerate} 
 	for every $m, m' \in M_{\overline{0}}\cup  M_{\overline{1}}$ and $n , n' \in N_{\overline{0}}\cup  N_{\overline{1}}$.

 Let $M$ and $N$ be two Lie superalgebras with actions on each other. Let $X_{M , N}$ be the $\mathbb{Z}_{2}$-graded set of all symbols $m\otimes n$, where $m \in M_{\overline{0}}\cup  M_{\overline{1}}$, $n \in N_{\overline{0}}\cup  N_{\overline{1}}$ and the $\mathbb{Z}_{2}$-gradation is given by $|m\otimes n|=|m|+|n|$. The non-abelian tensor product of $M$ and $N$, denoted by $M \otimes N$, as the Lie superalgebra generated by $X_{M , N}$  and subject to the relations:
 \begin{enumerate} 
 	\item $\lambda (m \otimes n)=\lambda m \otimes n= m \otimes \lambda n$,
 	\item $(m + m')\otimes n= m\otimes n +m'\otimes n$,  where $m , m'$ have the same degree,\\
 	$m \otimes (n + n')=m \otimes n + m \otimes n'$,  where $n , n'$ have the same degree,
 	
 	\item $[m , m']\otimes n= (m\otimes {}^ {m'}{n}) -(-1)^{|m||m'|}(m'\otimes {}^{m}{n})$,\\
 	$m\otimes [n,n']=(-1)^{|n'|(|m|+|n|)}({}^{n'}{m}\otimes n)-(-1)^{|m||n|}({}^{n}{m}\otimes n')$,
 	
 	\item $[m\otimes n,m'\otimes n']=-(-1)^{|m||n|}({}^{n}{m} \otimes {}^{m'}{n'}),$      
 \end{enumerate}
 for every $\lambda \in \mathbb{K}, m, m' \in M_{\overline{0}}\cup  M_{\overline{1}}$ and $n , n' \in N_{\overline{0}}\cup  N_{\overline{1}}$. The tensor product $M \otimes N$ has $\mathbb{Z}_{2}$-grading given by $(M \otimes N)_{\alpha}=\oplus_{\beta+\gamma=\alpha}  (M_{\beta}+N_{\gamma})$ for $\alpha, \beta, \gamma \in \Z_2$. If $M=M_{\overline{0}}$ and $N=N_{\overline{0}}$ then $M \otimes N$ is the non-abelian tensor product of Lie algebras introduced and studied in \cite{Ellis1991}.
 
 Actions of Lie superalgebras $M$ and $N$ on each other are said to be compatible if 
 \begin{enumerate}
 	\item ${}^{({}^{n}{m})}{n'}=-(-1)^{|m||n|}[{}^{m}{n},n']$,
 	\item ${}^{({}^{m}{n})}{m'}=-(-1)^{|m||n|}[{}^{n}{m},m']$,
 \end{enumerate}
 for all $m, m' \in M_{\overline{0}}\cup  M_{\overline{1}}$ and $n, n' \in N_{\overline{0}}\cup  N_{\overline{1}}$.
 For instance if $M$, $N$ are two graded ideals of a Lie superalgebra then the actions induced by the bracket are compatible.
 
 \smallskip


We will denote the $\mathbb{K}$-module tensor product of $M$ and $N$ as $M \otimes_{mod} N$. The following result comes immediately from \cite[Proposition 3.8]{GKL2015}. 

 \begin{proposition}\label{propp3.2}
 	If $N\subseteq	L^2\cap Z(G)$, then the sequence
 	$N\otimes L\longrightarrow L\otimes L \longrightarrow L/N \otimes L/N\longrightarrow 0$
is exact.
 \end{proposition}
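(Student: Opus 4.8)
The plan is to produce the two maps from functoriality and then check exactness at each spot, with essentially all the work concentrated at the middle term. Write $\iota\colon N\hookrightarrow L$ for the inclusion and $\pi\colon L\to L/N$ for the projection. Since the non-abelian tensor product is functorial in both variables, these induce Lie superalgebra homomorphisms $\psi=\iota\otimes\mathrm{id}\colon N\otimes L\to L\otimes L$ and $\phi=\pi\otimes\pi\colon L\otimes L\to L/N\otimes L/N$. Exactness at $L/N\otimes L/N$, i.e. surjectivity of $\phi$, is immediate: that target is generated by the symbols $\bar l\otimes\bar{l'}=\phi(l\otimes l')$, and $\phi$ is a homomorphism, so it is onto. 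The inclusion $\mathrm{Im}\,\psi\subseteq\ker\phi$ is equally immediate, since $\phi(n\otimes l)=\pi(n)\otimes\pi(l)=0$ for every $n\in N$.

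The substance is the reverse inclusion $\ker\phi\subseteq\mathrm{Im}\,\psi$, and here I would invoke Proposition~3.8 of \cite{GKL2015}. For the central ideal $N\subseteq Z(L)$ that result furnishes the right-exact sequence and, more precisely, identifies $\ker\phi$ as the subalgebra of $L\otimes L$ generated by all tensors $n\otimes l$ and $l\otimes n$ with $n\in N$, $l\in L$; equivalently $\ker\phi=\mathrm{Im}\,\psi+\mathrm{Im}\,\psi'$, where $\psi'\colon L\otimes N\to L\otimes L$ is induced by inclusion in the \emph{second} factor. It is precisely here that centrality is used: by the bracket relation for the tensor product, $[\,l''\otimes l''',\,n\otimes l\,]=-(-1)^{|l''||l'''|}\bigl([l''',l'']\otimes[n,l]\bigr)=0$ because $[n,l]=0$, so each $n\otimes l$ is central in $L\otimes L$, which is what guarantees that these generators already exhaust the kernel. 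Consequently everything reduces to showing $\mathrm{Im}\,\psi'\subseteq\mathrm{Im}\,\psi$, that is, that each $l\otimes n$ with $n\in N$ lies in $\mathrm{Im}\,\psi$.

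This last reduction is exactly where the hypothesis $N\subseteq L^2$ enters. Writing a homogeneous $n\in N$ as a finite sum of brackets $n=\sum_i[a_i,b_i]$ with $a_i,b_i$ homogeneous in $L$, it suffices to treat a single $n=[a,b]$. First I would expand $l\otimes[a,b]$ by the relation that opens a bracket in the second slot, obtaining a combination of ${}^{b}l\otimes a$ and ${}^{a}l\otimes b$; I would then re-expand each of these by the relation that opens a bracket in the first slot. Collecting the resulting terms and using the graded skew-symmetry $[b,a]=-(-1)^{|a||b|}[a,b]$, the summands that again contain $l\otimes[a,b]$ recombine, and one solves for $l\otimes[a,b]$ to reach the clean identity
$$ l\otimes[a,b]=-(-1)^{|l|(|a|+|b|)}\,\bigl([a,b]\otimes l\bigr). $$
Summing over $i$ gives $l\otimes n=-(-1)^{|l||n|}\,(n\otimes l)$, which manifestly lies in $\mathrm{Im}\,\psi$. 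This yields $\mathrm{Im}\,\psi'\subseteq\mathrm{Im}\,\psi$, hence $\ker\phi=\mathrm{Im}\,\psi$, completing the proof.

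I expect the main obstacle to be the graded sign bookkeeping in the displayed identity. In the purely even case the terms collapse to $l\otimes[a,b]=-([a,b]\otimes l)$ after an elementary cancellation, but in the super setting one must carry the Koszul signs through both applications of the defining relation and verify that, after solving, the coefficients of the two surviving tensors $a\otimes[l,b]$ and $b\otimes[l,a]$ match those arising from $n\otimes l$ up to the single global sign $-(-1)^{|l||n|}$. A secondary point requiring care is the precise content drawn from Proposition~3.8: one must confirm that it genuinely pins down $\ker\phi$ as the span of the mixed tensors $n\otimes l$ and $l\otimes n$, so that the reduction to $\mathrm{Im}\,\psi'\subseteq\mathrm{Im}\,\psi$ is legitimate; the role of $N\subseteq Z(L)$ is to make $\mathrm{Im}\,\psi$ central (hence an ideal exhausting the kernel), while $N\subseteq L^2$ is what collapses the two families of generators into one.
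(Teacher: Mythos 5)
Your proof is correct and takes the same route as the paper, which disposes of this proposition in a single line by citing Proposition 3.8 of \cite{GKL2015}; your write-up simply supplies the details behind that ``comes immediately''. In particular, your key identity $l\otimes[a,b]=-(-1)^{|l|(|a|+|b|)}\bigl([a,b]\otimes l\bigr)$ does check out against the defining relations of $L\otimes L$ (the two regenerated copies of $l\otimes[a,b]$ recombine exactly as you describe, and no division is needed to solve), and you correctly locate the division of labor between the hypotheses: $N\subseteq Z(L)$ makes the mixed tensors $n\otimes l$ central, so the kernel is their span, while $N\subseteq L^2$ is precisely what collapses the generators $l\otimes n$ into the image of $N\otimes L$ — consistent with the paper's later identification $L\,\square\, L\cong\Gamma(L/L^2)$, which shows this collapse would fail for $n\notin L^2$.
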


 \begin{proposition}\label{pro3333}\cite[Proposition 3.5]{GKL2015}
If $M$ and $N$ act trivially on each other, then there is an isomorphism  \[M\otimes N \cong M^{ab}\otimes _{mod}N^{ab}.\]
where $M^{ab}=M/[M,M]$ and $N^{ab}=N/[N,N].$
 \end{proposition}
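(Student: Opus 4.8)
The plan is to feed the hypothesis of trivial mutual actions, ${}^m n = 0$ and ${}^n m = 0$ for all homogeneous $m \in M$ and $n \in N$, directly into the four defining relations of $M \otimes N$ and watch them collapse. First I would examine relation (4): since ${}^n m = 0$, every bracket of generators satisfies $[m \otimes n, m' \otimes n'] = -(-1)^{|m||n|}({}^n m \otimes {}^{m'} n') = 0$. As the symbols $m \otimes n$ generate $M \otimes N$ and the bracket is bilinear, this forces $M \otimes N$ to be an \emph{abelian} Lie superalgebra, so only its underlying graded $\mathbb{K}$-module structure remains to be identified.

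Next I would process relation (3). Under the trivial action both terms on its right-hand side vanish, giving $[m, m'] \otimes n = 0$ and $m \otimes [n, n'] = 0$ for all homogeneous elements. Consequently every generator whose first argument lies in $[M, M]$, or whose second argument lies in $[N, N]$, is zero, and together with the bilinearity of relations (1) and (2) this shows that the canonical map $(m, n) \mapsto m \otimes n$ descends to a graded $\mathbb{K}$-bilinear map $M^{ab} \times N^{ab} \to M \otimes N$ whose image spans the target. I would then invoke the universal property of the graded module tensor product: this bilinear map induces a homogeneous linear map $\psi : M^{ab} \otimes_{mod} N^{ab} \to M \otimes N$, $\bar m \otimes \bar n \mapsto m \otimes n$, which is automatically a homomorphism of abelian Lie superalgebras. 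In the reverse direction, I would define $\phi : M \otimes N \to M^{ab} \otimes_{mod} N^{ab}$ on generators by $m \otimes n \mapsto \bar m \otimes \bar n$. Relations (3) and (4) are then respected trivially, since the target is abelian and since $[M, M]$ and $[N, N]$ map to zero in $M^{ab}$ and $N^{ab}$, while relations (1) and (2) transfer bilinearity directly; hence the generators-and-relations description of $M \otimes N$ lets $\phi$ extend to a homomorphism. Evaluating $\phi \circ \psi$ and $\psi \circ \phi$ on generators shows they are mutually inverse, giving the isomorphism.

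The step I expect to demand the most care is confirming that $\phi$ is \emph{well-defined}, i.e.\ that each defining relation of $M \otimes N$ is genuinely compatible with the relations of $M^{ab} \otimes_{mod} N^{ab}$. This is where the Koszul signs $(-1)^{|m||n|}$ and the $\mathbb{Z}_2$-grading $|m \otimes n| = |m| + |n|$ on both sides must be matched precisely, rather than any deeper structural difficulty. Once well-definedness is secured, the remainder is a routine check on generators, and the conclusion $M \otimes N \cong M^{ab} \otimes_{mod} N^{ab}$ follows.
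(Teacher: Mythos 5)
Your proof is correct, and it follows essentially the same route as the source: the paper itself gives no proof here, quoting the result directly from \cite[Proposition 3.5]{GKL2015}, whose argument is precisely the collapse you describe --- trivial actions kill relation (4), making $M\otimes N$ abelian, and reduce relation (3) to the statement that generators factor through $M^{ab}$ and $N^{ab}$, leaving exactly the defining relations of the graded module tensor product. Your attention to the well-definedness of $\phi$ and to the matching $\mathbb{Z}_2$-gradings is the right place to be careful, and the argument goes through as you outline it.
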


\begin{proposition}\label{prop4a}\cite[Proposition 3.6]{Padhandetec}
 Suppose the Lie superalgebras $L$ and $M$ are acting trivially on each other, then
 	\[(L \oplus M) \otimes (L \oplus M) \cong (L \otimes L) \oplus (L\otimes M) \oplus (M \otimes L) \oplus (M \otimes M).\]
 \end{proposition}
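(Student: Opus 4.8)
The plan is to construct mutually inverse Lie superalgebra homomorphisms between $P\otimes P$, where $P:=L\oplus M$, and $T:=(L\otimes L)\oplus(L\otimes M)\oplus(M\otimes L)\oplus(M\otimes M)$. Two structural facts drive everything. First, since $L$ and $M$ act trivially on one another, the self-action of $P$ is block-diagonal: for homogeneous elements ${}^{(l+m)}(l'+m')=[l+m,l'+m']=[l,l']+[m,m']$, so only the $L$-on-$L$ and $M$-on-$M$ actions survive. Second, writing a homogeneous generator $(l+m)\otimes(l'+m')$ (with $l,m$ of equal degree and $l',m'$ of equal degree) and applying bilinearity, every generator of $P\otimes P$ expands as $l\otimes l'+l\otimes m'+m\otimes l'+m\otimes m'$. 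Hence the sub-superalgebras generated by the pure tensors of the four types already span $P\otimes P$.

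For the forward direction I would define $f\colon P\times P\to T$ on homogeneous arguments by $f(l+m,\,l'+m')=(l\otimes l',\,l\otimes m',\,m\otimes l',\,m\otimes m')$ and verify that $f$ is a Lie superpairing. Since the three superpairing axioms coincide with the bracket-defining relations of the non-abelian tensor product, this produces an induced homomorphism $\bar f\colon P\otimes P\to T$ with $\bar f(p\otimes q)=f(p,q)$. The verification is exactly where the block-diagonal action is used: replacing each bracket by $[l,l']+[m,m']$ and discarding the vanishing cross-actions, every superpairing identity splits componentwise into the corresponding defining relation of $L\otimes L$, $L\otimes M$, $M\otimes L$ and $M\otimes M$. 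The genuinely mixed slots, e.g. a term $[l,l']\otimes m\in L\otimes M$, vanish on both sides because they reduce via the bracket relations to expressions involving annihilated actions such as ${}^{l}m=0$. This sign-and-degree bookkeeping is the main obstacle; the rest is formal.

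For the inverse I would invoke functoriality of the tensor product for the inclusions $\iota_L\colon L\hookrightarrow P$ and $\iota_M\colon M\hookrightarrow P$, which are equivariant for the block-diagonal action; these give homomorphisms $\iota_X\otimes\iota_Y\colon X\otimes Y\to P\otimes P$ (for $X,Y\in\{L,M\}$) carrying a pure tensor to the same symbol. To assemble them into a homomorphism $g\colon T\to P\otimes P$ out of the direct sum, the four images must bracket to zero pairwise; by bilinearity it suffices to check this on pure tensors, and each such bracket is computed from the fourth defining relation and then killed by a trivial action, for instance $[\,l\otimes l',\,m\otimes m'\,]=-(-1)^{|l||l'|}\big([l',l]\otimes[m,m']\big)=0$ since ${}^{m}([l',l])=0$. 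Finally I would check the two composites on generators: $\bar f$ sends the symbol $l\otimes l'$ in the image of $\iota_L\otimes\iota_L$ to $(l\otimes l',0,0,0)$ and similarly for the other three types, giving $\bar f\circ g=\mathrm{id}_T$, while $g\bar f\big((l+m)\otimes(l'+m')\big)=l\otimes l'+l\otimes m'+m\otimes l'+m\otimes m'$ recovers the original generator by the bilinear expansion. Thus $\bar f$ is an isomorphism, and in particular the sum is direct and each summand is identified as claimed.
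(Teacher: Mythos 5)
Your proof is correct. Note that the paper itself contains no proof of this proposition---it is quoted from \cite{Padhandetec} as a known result---and your argument (inducing $\bar f\colon (L\oplus M)\otimes(L\oplus M)\to T$ from the evident Lie superpairing, building the inverse out of the four functorial maps $\iota_X\otimes\iota_Y$ after checking that the cross-brackets vanish because the mutual actions are trivial, and verifying both composites on generators) is exactly the standard proof of that cited statement, with the one genuinely delicate point, namely that the mixed components such as $[l,l']\otimes m''$ die on both sides of each superpairing axiom, handled correctly.
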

 
Consider a Lie superalgebra $M = M_{\bar{0}}\oplus M_{\bar{1}}$ and the identity map on $M$, i.e., the map $id : M \longrightarrow M$. The map $id$ is a crossed module, and hence the exterior square  $M\wedge M$  is obtained from $M\otimes M$ by imposing the additional relations:
 \begin{enumerate}
 	\item $m \otimes m'+   (-1)^{|m||m'|}(m'\otimes m)=0$
 	\item $m_{\overline{0}} \otimes m_{\overline{0}}=0$
 \end{enumerate}
 with $m,m' \in M_{\overline{0}} \cup M_{\overline{1}}$, and $m'\wedge m $ is the image of $m'\otimes m$.

 \begin{lemma}\label{lem4b}\cite[Lemma 6.1]{GKL2015}
 Let $M \square M$ be the submodule of $M \otimes M$ generated by elements
 \begin{enumerate}
 		\item $m \otimes m'+ (-1)^{|m||m'|}(m'\otimes m)$, 
 		\item $m_{\overline{0}} \otimes m_{\overline{0}}$,
 	\end{enumerate} 
with $m,m' \in M_{\overline{0}} \cup M_{\overline{1}},~m_{0} \in M_{\overline{0}}$. Then $M \square M$ is a central graded ideal of $M \otimes M$.
 \end{lemma}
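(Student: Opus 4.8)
The plan is to prove the stronger statement that $M \square M$ is central, that is, $[M \square M,\, M \otimes M] = 0$; once centrality is established, the inclusion $[M\square M, M\otimes M]\subseteq M\square M$ holds trivially, so $M \square M$ is automatically a graded ideal. It is graded because each of its generators is homogeneous: the element in (1) has degree $|m|+|m'|$ and the element in (2) has degree $\bar 0$, so the submodule they generate is $\mathbb{Z}_2$-graded.

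The first observation I would record is that the fourth defining relation of the tensor product, $[m\otimes n, m'\otimes n'] = -(-1)^{|m||n|}\,{}^{n}m \otimes {}^{m'}n'$, expresses the bracket of two simple tensors again as a scalar multiple of a simple tensor, since here the action is the internal one, ${}^{n}m = [n,m]$ and ${}^{m'}n' = [m',n']$. Consequently $M\otimes M$ is linearly spanned by the simple tensors $p\otimes q$, and to verify that a homogeneous element $g$ is central it suffices to check that $[g, p\otimes q] = 0$ for all homogeneous $p, q \in M$; the bracket taken in the other order then vanishes by the graded skew-symmetry of $M\otimes M$.

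Next I would carry out the two generator computations. For a generator of type (1), write $g = m\otimes m' + (-1)^{|m||m'|}(m'\otimes m)$ and apply the fourth relation to each summand of $[g, p\otimes q]$. Rewriting the first contribution by means of $[m',m] = -(-1)^{|m||m'|}[m,m']$, the two resulting terms become $[m,m']\otimes[p,q]$ and $-[m,m']\otimes[p,q]$, which cancel. For a generator of type (2), $g = m_{\bar 0}\otimes m_{\bar 0}$ with $m_{\bar 0}\in M_{\bar 0}$, the same relation gives $[g, p\otimes q] = -[m_{\bar 0},m_{\bar 0}]\otimes [p,q]$, and graded skew-symmetry together with $\operatorname{char}\mathbb{F}\neq 2$ forces $[m_{\bar 0},m_{\bar 0}]=0$, so this term also vanishes.

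Finally, since every generator of $M\square M$ brackets trivially with every spanning element $p\otimes q$ of $M\otimes M$, bilinearity yields $[M\square M,\, M\otimes M]=0$, which proves that $M\square M$ is central and hence a central graded ideal. The only step demanding real care is the sign bookkeeping in the type-(1) cancellation; the argument stays short precisely because of the reduction to simple tensors afforded by the fourth defining relation, which is the conceptual crux of the proof.
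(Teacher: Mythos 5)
Your proof is correct: the reduction to simple tensors via the fourth defining relation $[m\otimes n, m'\otimes n'] = -(-1)^{|m||n|}({}^{n}m \otimes {}^{m'}n')$ is valid, and the sign bookkeeping in both generator computations checks out (the type-(1) terms become $[m,m']\otimes[p,q]$ and $-[m,m']\otimes[p,q]$, and $[m_{\bar 0},m_{\bar 0}]=0$ since $\operatorname{char}\mathbb{F}\neq 2$). The paper itself offers no proof, quoting the result from \cite[Lemma 6.1]{GKL2015}, and your direct verification on generators is essentially the argument given in that source.
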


 \begin{definition}
The exterior product of $M$ and $M$ is denoted as $M \wedge M$ and is defined as the quotient Lie superalgebra \[M\wedge M= \frac{M\otimes M}{ M\square M}.\] 
 \end{definition}
For any $m \otimes m' \in M \otimes M$ we denote the coset $m \otimes m' +M \square M$ by $m \wedge m'$.

\smallskip

The universal quadratic functor of Lie superalgebra  was  introduced in \cite{Pilar}. The quadratic functor  helps to establish the relations between the Lie exterior product and the Lie tensor product of Lie superalgebras. Here we recall the definition of universal quadratic functor of Lie superalgebra and some results from \cite{Pilar} which will be useful in the next section.

\begin{definition}
Let $L$ be a supermodule. We define the supermodule $\Gamma(M)$ as the direct sum
	\[\Gamma(M)=R^{M_{\bar{0}}} \oplus (M\otimes M) 
	=\{\gamma(m_{\bar{0})}+ m'\otimes m'' \lvert m_{\bar{0}} \in M_{\bar{0}}, \, m',m'' \in M\},\]
and subject to the homogeneous relations
	\begin{equation}
		\gamma(\lambda m_{\bar{0}})=\lambda^2\gamma(m_{\bar{0}})
	\end{equation}
	
	\begin{equation}
		\gamma(m_{\bar{0}}+ m'_{\bar{0}})-\gamma(m_{\bar{0}})-\gamma(m'_{\bar{0}})= m_{\bar{0}}\otimes m'_{\bar{0}}
	\end{equation}
	\begin{equation}
		m\otimes m'= (-1)^{|m||m'|} m'\otimes m
	\end{equation}

	\begin{equation}
		m_{\bar{1}}\otimes m_{\bar{1}}=0
	\end{equation}
where $\lambda \in \mathbb{K}$, $m_{\bar{0}}, m'_{\bar{0}} \in M_{\bar{0}}$, $	m_{\bar{1}} \in 	M_{\bar{1}}$ and $m,m' \in M$ with the induced grading.
\end{definition}



	
	

\begin{proposition}
For any Lie superalgebra $M$, there exists an exact sequence
	\begin{equation}\label{eq31}
		\Gamma(M^{ab}) \overset{\psi}\longrightarrow M\otimes M \overset{\pi}\longrightarrow M \wedge M \longrightarrow 0.
	\end{equation}
Also, given two graded ideals $I$ and $J$ of $M$, the following sequence is exact:
\begin{equation}
	\Gamma(\frac{I\cap J}{[I,J]}) \overset{\psi}\longrightarrow I\otimes J \overset{\pi}\longrightarrow I \wedge J \longrightarrow 0.
\end{equation}
\end{proposition}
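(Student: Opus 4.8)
The plan is to construct $\psi$ explicitly on generators and then identify its image with the kernel of the projection $\pi$, which is surjective by the very definition of $M\wedge M$. On the generators of $\Gamma(M^{ab})$ I would set $\psi(\gamma(\overline{m_{\bar 0}}))=m_{\bar 0}\otimes m_{\bar 0}$ and $\psi(\overline{m}\otimes\overline{m'})=m\otimes m'+(-1)^{|m||m'|}(m'\otimes m)$, where $\overline{(\cdot)}$ denotes the class in $M^{ab}=M/[M,M]$ and $m,m'$ are arbitrary homogeneous representatives. Writing $s(m,m')$ for the graded-symmetric element on the right, one checks that the defining relations $(3.1)$--$(3.4)$ of $\Gamma$ are respected: relation $(3.3)$ holds because $s$ is graded symmetric, $(3.4)$ because $s(m_{\bar 1},m_{\bar 1})=0$, $(3.2)$ from the additivity relation $(2)$ of the tensor product, and $(3.1)$ from the scalar relation $(1)$. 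Thus exactness at $M\wedge M$ is immediate, and the whole content of the statement reduces to well-definedness of $\psi$ and the computation of its image.

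The heart of the argument is that $\psi$ is well defined, i.e. that these assignments are independent of the chosen representatives; equivalently, that the map $\Gamma(M)\to M\otimes M$ given by the same formulas (which is unproblematic) kills the kernel of $\Gamma(M)\to\Gamma(M^{ab})$. Concretely, I must show that $s(x,[y,z])=0$ and $w_{\bar 0}\otimes w_{\bar 0}=0$ for every bracket $w\in[M,M]_{\bar 0}$. The key computation uses the bracket-expansion relation (the third defining relation of $M\otimes M$) together with the self-action ${}^{a}b=[a,b]$ to expand both $x\otimes[y,z]$ and $[y,z]\otimes x$, which rewrites $s(x,[y,z])$ as a signed combination of symmetric elements $s(x',[y',z'])$ in which one of the outer arguments has entered the bracket. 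Applying this expansion a second time to the resulting terms and using the graded skew-symmetry $[y,z]=-(-1)^{|y||z|}[z,y]$, the expression becomes self-referential and collapses to an identity of the form $2\,s(x,[y,z])=0$; since $\operatorname{char}\mathbb{F}\neq 2$ this forces $s(x,[y,z])=0$. Taking $x=w$ a bracket then gives $2(w_{\bar 0}\otimes w_{\bar 0})=s(w_{\bar 0},w_{\bar 0})=0$, hence $w_{\bar 0}\otimes w_{\bar 0}=0$, which is exactly the assertion that $\psi$ factors through $\Gamma(M^{ab})$.

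With $\psi$ well defined, exactness at $M\otimes M$ follows by comparing generators: the image of $\psi$ is the submodule generated by the elements $m_{\bar 0}\otimes m_{\bar 0}$ and $m\otimes m'+(-1)^{|m||m'|}(m'\otimes m)$, which by Lemma \ref{lem4b} is precisely $M\square M=\ker\pi$; therefore $\operatorname{Im}\psi=\ker\pi$ and the first sequence is exact. For the relative statement I would run the identical argument with $I\otimes J$, $I\wedge J$ and the diagonal submodule $I\square J$ in place of their absolute counterparts: the diagonal elements $w\otimes w$ and the symmetric elements are defined for $w,w'\in I\cap J$, and the same two-fold expansion (now with the new brackets landing in $[I,J]$) shows that $\psi$ factors through $\Gamma((I\cap J)/[I,J])$ with image $I\square J=\ker\pi$. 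The main obstacle I anticipate is purely the sign bookkeeping in the super setting: verifying that the two applications of the bracket-expansion relation cancel to leave the clean coefficient $2$ requires careful tracking of the Koszul signs attached to $|x|,|y|,|z|$, and it is precisely here that the hypothesis $\operatorname{char}\mathbb{F}\neq 2$ enters.
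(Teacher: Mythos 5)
Your proposal is correct in outline, but there is nothing in the paper to compare it against: the paper gives no proof of this proposition at all. It is one of the results ``recalled \dots from \cite{Pilar}'' (the surrounding Propositions carry explicit citations to that thesis), so you have supplied an argument where the paper simply imports one. What you propose is the standard proof, the super analogue of Ellis's argument for Lie algebras, and the key computation you flag as the main risk does collapse exactly as you predict. Writing $T(x,y,z)=s(x,[y,z])$ with $s(u,v)=u\otimes v+(-1)^{|u||v|}(v\otimes u)$, the two bracket-expansion relations of $M\otimes M$ together with the graded symmetry $s(u,v)=(-1)^{|u||v|}s(v,u)$ and skew-symmetry of the bracket yield the cyclic identity $T(x,y,z)=(-1)^{|x|(|y|+|z|)}\,T(y,z,x)+(-1)^{|z|(|x|+|y|)}\,T(z,x,y)$; substituting the cyclically permuted instance of this identity into itself makes the $T(x,y,z)$ terms cancel and leaves $2(-1)^{|z|(|x|+|y|)}\,T(z,x,y)=0$, so $T\equiv 0$ since $\operatorname{char}\mathbb{F}\neq 2$. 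Your handling of the $\gamma$-generator is likewise right, provided you also note that for a sum $w=\sum_t w_t$ of brackets one has $w\otimes w=\sum_t w_t\otimes w_t+\sum_{t<u}s(w_t,w_u)$, each summand vanishing by the two facts already established.

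Two smaller points deserve care. First, in your exactness step the identification $\ker\pi=M\square M$ is true by the very definition of $M\wedge M$ as the quotient $(M\otimes M)/(M\square M)$; what Lemma \ref{lem4b} actually supplies is that $M\square M$ is a central graded ideal, so that this quotient is a Lie superalgebra and $\pi$ a homomorphism -- your citation conflates these two roles, harmlessly. Second, in the relative case the phrase ``identical argument'' hides a genuine wrinkle: a generator $[i,j]$ of $[I,J]$ (with $i\in I$, $j\in J$) is in general \emph{not} a bracket of two elements of $J$, so $k\otimes[i,j]$ cannot be expanded by the second bracket relation of $I\otimes J$ directly. One must instead route through the first relation, $[k,i]\otimes j=k\otimes[i,j]-(-1)^{|k||i|}\,i\otimes[k,j]$, and dually reach $[i,j]\otimes k$ by expanding $i\otimes[j,k]$ for $j,k\in J$; the new brackets $[k,i]$, $[k,j]$ do land in $[I,J]\subseteq I\cap J$ as you say, but the outer arguments of the resulting symmetric elements now lie only in $I$ or only in $J$, and one must check the same collapse to $2(\cdot)=0$ in that slightly asymmetric setting. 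With these adjustments your sketch completes to a full and correct proof.
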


\begin{proposition}\cite[ Proposition 7.2.4]{Pilar}\label{pro310}
Let $M$ and $N$  be two supermodules. Then 
	\[ \Gamma(M\oplus N)\cong \Gamma(M)\oplus\Gamma(N) \oplus (M\otimes_{mod} N). \]
\end{proposition}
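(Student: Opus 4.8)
The plan is to exhibit the isomorphism explicitly by constructing a pair of mutually inverse supermodule homomorphisms between the two sides, built from the canonical inclusions $i_M\colon M\hookrightarrow M\oplus N$ and $i_N\colon N\hookrightarrow M\oplus N$ together with the functoriality of $\Gamma$. The structural heart of the argument is the polarization relation in the definition of $\Gamma$ (the second defining relation), which converts the ``diagonal'' contribution in $\Gamma(M\oplus N)$ into an honest cross term lying in $M\otimes_{mod}N$.

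First I would define the forward map
\[
\phi\colon \Gamma(M)\oplus\Gamma(N)\oplus (M\otimes_{mod}N)\longrightarrow \Gamma(M\oplus N)
\]
on the three summands separately. On $\Gamma(M)$ and $\Gamma(N)$ take the maps induced by functoriality, sending $\gamma(m_{\bar 0})\mapsto \gamma(i_M(m_{\bar 0}))$ and $m\otimes m'\mapsto i_M(m)\otimes i_M(m')$, and symmetrically for $N$. On the module tensor product set $m\otimes n\mapsto i_M(m)\otimes i_N(n)$. Well-definedness on the first two summands is immediate, since $i_M$ and $i_N$ are supermodule morphisms and hence respect the four defining relations of $\Gamma$; on the third summand one needs only the $\mathbb K$-bilinearity of the tensor symbol in $\Gamma(M\oplus N)$, and no further relation is violated because the odd-vanishing relation constrains only repeated odd symbols $m_{\bar 1}\otimes m_{\bar 1}$, whereas $i_M(m)$ and $i_N(n)$ lie in complementary summands and are never equal.

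Next I would produce the inverse. The key computation is that for an even element $(m_{\bar 0},n_{\bar 0})\in (M\oplus N)_{\bar 0}$, the polarization relation gives
\[
\gamma\big((m_{\bar 0},0)+(0,n_{\bar 0})\big)=\gamma((m_{\bar 0},0))+\gamma((0,n_{\bar 0}))+(m_{\bar 0},0)\otimes(0,n_{\bar 0}),
\]
so every generator $\gamma(\cdot)$ splits into a $\Gamma(M)$-part, a $\Gamma(N)$-part and a cross term. Likewise bilinearity expands a tensor generator $(m,n)\otimes(m',n')$ of $\Gamma(M\oplus N)$ into four pieces: the two ``pure'' pieces land in $\Gamma(M)$ and $\Gamma(N)$, while the graded symmetry relation identifies the two mixed pieces via $(0,n)\otimes(m',0)=(-1)^{|n||m'|}(m',0)\otimes(0,n)$, so that both collapse into the single summand $M\otimes_{mod}N$. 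This shows $\phi$ is surjective, and it lets me define $\psi\colon\Gamma(M\oplus N)\to \Gamma(M)\oplus\Gamma(N)\oplus (M\otimes_{mod}N)$ on generators by recording these three components, inserting the Koszul sign $(-1)^{|n||m'|}$ whenever a mixed term occurs in ``$N$-first'' order so that it is booked consistently as an element of $M\otimes_{mod}N$ in the $M$-first convention.

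The main obstacle is verifying that $\psi$ is well defined, i.e. that the component assignment respects all four defining relations of $\Gamma(M\oplus N)$. The scaling and graded-symmetry relations amount to routine sign bookkeeping, and the additive relation is precisely the polarization identity displayed above. The delicate point is the odd-vanishing relation: one must check that $(m_{\bar 1},n_{\bar 1})\otimes(m_{\bar 1},n_{\bar 1})=0$ imposes nothing beyond the already-enforced $m_{\bar 1}\otimes m_{\bar 1}=0$ and $n_{\bar 1}\otimes n_{\bar 1}=0$ in the respective factors. Expanding the square, the pure diagonal pieces vanish by the odd-vanishing relation in $M$ and in $N$, while the two mixed pieces cancel against one another through the graded symmetry relation with sign $(-1)^{|m_{\bar 1}||n_{\bar 1}|}=-1$; hence the odd cross terms $M_{\bar 1}\otimes_{mod}N_{\bar 1}$ survive freely and the decomposition is consistent. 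Once $\psi$ is seen to be well defined, a direct check on generators yields $\phi\circ\psi=\mathrm{id}$ and $\psi\circ\phi=\mathrm{id}$, completing the proof; alternatively, having shown $\phi$ surjective, injectivity follows at once since $\psi$ furnishes the explicit one-sided inverse realizing the claimed internal direct-sum splitting of $\Gamma(M\oplus N)$.
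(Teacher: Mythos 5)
Your proof is correct: the two delicate points that any proof of this decomposition must handle --- the polarization identity splitting $\gamma\bigl((m_{\bar 0},n_{\bar 0})\bigr)$ into pure parts plus a cross term, and the verification that the odd-diagonal relation $(m_{\bar 1},n_{\bar 1})\otimes(m_{\bar 1},n_{\bar 1})=0$ imposes nothing new because its two mixed pieces cancel via graded symmetry with sign $-1$ --- are exactly the ones you identify, and your sign bookkeeping for collapsing the two mixed tensor blocks into a single copy of $M\otimes_{mod}N$ is right. Note that the paper itself gives no proof to compare against (it imports the statement verbatim from the cited dissertation of P\'aez Guill\'an), so your argument stands as a self-contained justification; as a sanity check it is also consistent with the basis description of $\Gamma$ in Proposition \ref{th3.3}, which yields equal dimensions on the two sides, and the only cosmetic blemish is your remark that the odd-vanishing relation is ``not violated'' by $\phi$ on the third summand --- relations in the \emph{target} never obstruct well-definedness of a map into it, so only bilinearity of $(m,n)\mapsto i_M(m)\otimes i_N(n)$ was needed there.
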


\begin{definition}
A quadratic map between two supermodules $M$ and $N$ is a map 
	$\varphi: M \longrightarrow N$ satisfying:
	\begin{enumerate}
		\item $\varphi(\lambda m)=\lambda^2\varphi(m)$.
		\item The associated symmetric function $b_\varphi: M\times M\longrightarrow N$ defined by $b_\varphi(m,m')=\varphi(m+m')-\varphi(m)-\varphi(m')$ is bilinear.
\end{enumerate}
 \end{definition}

\begin{proposition}\cite[ Proposition 7.2.2]{Pilar}
The pair $\gamma = \gamma_M=(\gamma_{\bar{0}},b_\gamma)$, with $\gamma_{\bar{0}}:M_{\bar{0}}\longrightarrow \Gamma(M_{\bar{0}})$  defined by $\gamma_{\bar{0}}(x_{\bar{0}})= \gamma(x_{\bar{0}})$
	and $b_\gamma:M\times M \longrightarrow \Gamma(M)$  defined by $b_\gamma(x,y)=x\otimes y$  is a quadratic	map.
\end{proposition}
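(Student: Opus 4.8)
The assertion is that the tautological pair attached to $\Gamma(M)$ meets the axioms in the definition of a quadratic map, so the plan is purely verificational: read off each required identity from the four defining relations of $\Gamma(M)$, with no construction needed. I would organize the check into the even (divided-power) component, the bilinear component, and finally the compatibility that welds the two across the $\mathbb{Z}_2$-grading, since it is exactly this pairing of a genuine quadratic map on $M_{\bar 0}$ with a global bilinear form that makes the super notion of ``quadratic map'' into a pair rather than a single function.

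First I would handle the even component $\gamma_{\bar 0}\colon M_{\bar 0}\to\Gamma(M_{\bar 0})$, $x_{\bar 0}\mapsto\gamma(x_{\bar 0})$. Its homogeneity axiom $\gamma_{\bar 0}(\lambda x_{\bar 0})=\lambda^2\gamma_{\bar 0}(x_{\bar 0})$ is exactly the defining relation $\gamma(\lambda m_{\bar 0})=\lambda^2\gamma(m_{\bar 0})$ of $\Gamma(M_{\bar 0})$, so there is nothing to do beyond invoking it. Next I would treat $b_\gamma\colon M\times M\to\Gamma(M)$, $(x,y)\mapsto x\otimes y$. Bilinearity in each slot is built into the tensor symbol of $M\otimes M\subseteq\Gamma(M)$; the super-symmetry $b_\gamma(x,y)=(-1)^{|x||y|}b_\gamma(y,x)$ is the relation $m\otimes m'=(-1)^{|m||m'|}m'\otimes m$; and the vanishing $b_\gamma(x_{\bar 1},x_{\bar 1})=0$ on odd squares is the relation $m_{\bar 1}\otimes m_{\bar 1}=0$. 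Hence $b_\gamma$ is a super-symmetric bilinear form that is alternating on the odd part.

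The one identity that genuinely links the two components, and where I would be most careful, is the polarization compatibility: the symmetric function associated with $\gamma_{\bar 0}$ on $M_{\bar 0}$ must reproduce $b_\gamma$ there, namely $\gamma_{\bar 0}(x_{\bar 0}+y_{\bar 0})-\gamma_{\bar 0}(x_{\bar 0})-\gamma_{\bar 0}(y_{\bar 0})=x_{\bar 0}\otimes y_{\bar 0}=b_\gamma(x_{\bar 0},y_{\bar 0})$, where the two sides are compared under the canonical inclusion $\Gamma(M_{\bar 0})\hookrightarrow\Gamma(M)$. This is precisely the remaining defining relation $\gamma(m_{\bar 0}+m'_{\bar 0})-\gamma(m_{\bar 0})-\gamma(m'_{\bar 0})=m_{\bar 0}\otimes m'_{\bar 0}$, which shows the pair is coherent rather than an arbitrary juxtaposition of two maps. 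Specializing $x_{\bar 0}=y_{\bar 0}$ gives $b_\gamma(x_{\bar 0},x_{\bar 0})=2\gamma_{\bar 0}(x_{\bar 0})$, so under the standing hypothesis $\mathrm{char}\,\mathbb{F}\neq2$ the even quadratic datum is determined by $b_\gamma$ on the diagonal.

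The main obstacle is not analytic but bookkeeping: the four relations defining $\Gamma(M)$ were chosen exactly so that the tautological pair is quadratic, so the only real care lies in matching the super formalism correctly---tracking the Koszul sign in the bilinearity and symmetry checks for mixed-parity arguments, and confirming that the even quadratic datum in $\Gamma(M_{\bar 0})$ and the global bilinear datum in $\Gamma(M)$ agree on $M_{\bar 0}\times M_{\bar 0}$ under the natural inclusion. Once those conventions are fixed, every axiom reduces to one of the stated relations and the verification is complete.
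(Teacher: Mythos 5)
The paper offers no internal proof of this statement --- it is imported verbatim from P\'aez Guill\'an's thesis --- and your tautological verification is correct and is exactly the intended argument: the homogeneity of $\gamma_{\bar{0}}$, the bilinearity and super-symmetry of $b_\gamma$, its vanishing on odd squares, and the polarization identity $\gamma_{\bar{0}}(x_{\bar{0}}+y_{\bar{0}})-\gamma_{\bar{0}}(x_{\bar{0}})-\gamma_{\bar{0}}(y_{\bar{0}})=b_\gamma(x_{\bar{0}},y_{\bar{0}})$ are each precisely one of the four defining relations of $\Gamma(M)$. One point you handled well deserves note: the paper's own Definition of a quadratic map is stated for a single map $\varphi\colon M\to N$, while the proposition concerns a \emph{pair} $(\gamma_{\bar{0}},b_\gamma)$; the compatibility conditions you verified (polarization on $M_{\bar{0}}$ under the inclusion $\Gamma(M_{\bar{0}})\hookrightarrow\Gamma(M)$, super-symmetry, and $b_\gamma(x_{\bar{1}},x_{\bar{1}})=0$) are exactly the super-definition from the cited source, so your check proves the statement in its intended sense rather than merely the truncated definition printed in the paper.
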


\begin{proposition}\label{th3.3}\cite[Proposition 7.2.6]{Pilar}
Let $M$ be a free supermodule, and let $\{\bar{x_i}\}_{i\in I_{\bar{0}}}\bigcup \{\bar{x_i}\}_{i\in I_{\bar{1}}} $  be an ordered basis of $M$ composed by homogeneous elements and such that the elements $\{\bar{x_i}\}_{i\in I_{\bar{0}}}$ of $M_{\bar{0}}$ are less or equal than those of $M_{\bar{1}}$, $\{\bar{x_i}\}_{i\in I_{\bar{1}}}$. Then $\Gamma(M)$ is free with basis 
	$$\{\gamma_{\bar{0}}(x_i)\}_{i\in I_{\bar{0}}}\bigcup \{b_{\gamma}(x_i,x_j)\}_{i,j\in I_{\bar{0}}\bigcup I_{\bar{1}}, i< j} .$$
\end{proposition}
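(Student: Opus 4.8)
The plan is to reduce the whole computation to one-dimensional summands by repeatedly applying the direct-sum decomposition of Proposition~\ref{pro310}, and then to settle the resulting base cases directly from the defining relations of $\Gamma$ together with its universal property as the universal quadratic functor. Writing $M=\bigoplus_{i\in I_{\bar{0}}\cup I_{\bar{1}}}\langle x_i\rangle$ and using that the module tensor product distributes over direct sums, iterating Proposition~\ref{pro310} yields
\[
\Gamma(M)\cong \bigoplus_{i}\Gamma(\langle x_i\rangle)\ \oplus\ \bigoplus_{i<j}\bigl(\langle x_i\rangle\otimes_{mod}\langle x_j\rangle\bigr),
\]
where every unordered pair $\{i,j\}$ contributes its cross term exactly once, namely at the stage where the smaller of $x_i,x_j$ is split off. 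For a finite basis this is a finite iteration; for arbitrary $I$ the same formula holds because $\Gamma$, being presented by generators and relations, commutes with filtered colimits, so one passes to the direct limit over finite subfamilies.

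Next I would treat the three kinds of summand. For a one-dimensional \emph{even} summand $\langle x_i\rangle$ with $i\in I_{\bar{0}}$, applying the relation $\gamma(m_{\bar{0}}+m'_{\bar{0}})-\gamma(m_{\bar{0}})-\gamma(m'_{\bar{0}})=m_{\bar{0}}\otimes m'_{\bar{0}}$ to $m_{\bar{0}}=m'_{\bar{0}}=x_i$ together with $\gamma(2x_i)=4\gamma(x_i)$ forces $x_i\otimes x_i=2\gamma(x_i)$, so the tensor part is absorbed and $\Gamma(\langle x_i\rangle)$ is generated by $\gamma(x_i)$ alone; the quadratic map $\lambda x_i\mapsto\lambda^2$ into $\mathbb{K}$ sends $\gamma(x_i)$ to $1$, so $\gamma(x_i)\neq 0$ and this summand is free of rank one. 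For a one-dimensional \emph{odd} summand $\langle x_i\rangle$ with $i\in I_{\bar{1}}$ there is no even part, hence no $\gamma$-generator, and the relation $m_{\bar{1}}\otimes m_{\bar{1}}=0$ gives $x_i\otimes x_i=0$, so $\Gamma(\langle x_i\rangle)=0$. Finally each cross term $\langle x_i\rangle\otimes_{mod}\langle x_j\rangle$ is an ordinary tensor product of two free rank-one modules, hence free of rank one on $x_i\otimes x_j$, with degree $|x_i|+|x_j|$.

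Assembling these, the decomposition exhibits $\Gamma(M)$ as a direct sum of free rank-one modules, so it is automatically free on the union of their generators: the first family contributes exactly $\{\gamma(x_i)\}_{i\in I_{\bar{0}}}$ (the odd summands dropping out), and the second contributes one element $x_i\otimes x_j$ per ordered pair $i<j$, which under the convention that all even indices precede all odd ones are precisely the $b_\gamma(x_i,x_j)$ claimed in the statement. The only nontrivial independence input is the non-vanishing of each even rank-one generator, already supplied by the map $\lambda x_i\mapsto\lambda^2$; should one prefer a direct argument, I would instead build, for each isotypic block, an explicit quadratic map onto a genuinely free target (e.g.\ $\sum\lambda_i x_i\mapsto\sum_i\lambda_i^2 e_i+\sum_{i<j}\lambda_i\lambda_j e_{ij}$ on the even--even block) and invoke the universal property to force independence. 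The main obstacle I anticipate is the bookkeeping in the iteration: one must check that each cross term $x_i\otimes x_j$ is produced once and only once and carries the correct supersymmetry sign (antisymmetric when both indices are odd, symmetric otherwise), and, when $I$ is infinite, that the passage to the filtered colimit of finite subfamilies is legitimate and grading-compatible, so that no relation among the proposed generators is lost or introduced.
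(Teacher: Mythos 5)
The paper offers no proof of this proposition at all --- it is imported verbatim from P\'aez Guill\'an's thesis \cite[Proposition 7.2.6]{Pilar} --- so there is no internal argument to compare against; your proof is correct and follows the natural route: iterate the direct-sum decomposition of Proposition \ref{pro310} down to rank-one summands, settle the three base cases (even rank one free on $\gamma_{\bar{0}}(x_i)$ via $x_i\otimes x_i=2\gamma(x_i)$ and the quadratic map $\lambda x_i\mapsto\lambda^2$; odd rank one vanishing by relation $m_{\bar{1}}\otimes m_{\bar{1}}=0$; cross terms free of rank one), and pass to filtered colimits for infinite bases. This is exactly the decomposition technique the paper itself deploys elsewhere (compare the induction splitting off $A(1\mid 0)$ summands in the proof of Lemma \ref{lem41}), and your two flagged bookkeeping points --- each unordered pair contributing its cross term exactly once, and the identification of the abstract summand $\langle x_i\rangle\otimes_{mod}\langle x_j\rangle$ with the span of $b_\gamma(x_i,x_j)$ inside $\Gamma(M)$ --- are genuine but routine.
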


\begin{proposition}\label{th3.12}\cite[Proposition 7.2.9]{Pilar}
Let $M$ be a Lie superalgebra such that $M^{ab}$ is a free supermodule. Then the homomorphism $\psi$ in the sequence \ref{eq31} is injective.
\end{proposition}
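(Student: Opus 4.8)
The plan is to deduce the injectivity of $\psi$ from a linear-independence statement in an ordinary module tensor square, obtained by post-composing $\psi$ with an abelianization homomorphism; the freeness hypothesis enters exactly at the point where one must avoid a spurious factor of $2$.

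First I would record the explicit form of $\psi$ coming from the construction of the sequence \ref{eq31}: on generators it is given by $\psi(\gamma(\bar m_{\bar 0}))=m_{\bar 0}\otimes m_{\bar 0}$ and $\psi(b_\gamma(\bar m,\bar m'))=m\otimes m'+(-1)^{|m||m'|}(m'\otimes m)$, where $m_{\bar 0},m,m'$ are homogeneous lifts to $M$; by Lemma \ref{lem4b} these are precisely the generators of $\operatorname{Im}\psi=M\square M$. Since $M^{ab}$ is free, I fix a homogeneous ordered basis $\{x_i\}_{i\in I_{\bar 0}}\cup\{x_i\}_{i\in I_{\bar 1}}$ as in Proposition \ref{th3.3}, so that $\Gamma(M^{ab})$ is free on $B=\{\gamma_{\bar 0}(x_i)\}_{i\in I_{\bar 0}}\cup\{b_\gamma(x_i,x_j)\}_{i<j}$. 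It then suffices to prove that $\psi(B)$ is linearly independent in $M\otimes M$.

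Next I would introduce the homomorphism $\tau\colon M\otimes M\to M^{ab}\otimes_{mod}M^{ab}$ into the abelian target, determined on generators by $\tau(m\otimes n)=\bar m\otimes_{mod}\bar n$. Its well-definedness is the only routine verification: relations (1) and (2) are bilinearity, while in (3) and (4) every term carries a factor lying in $[M,M]$ (because the self-action is inner, ${}^n m=[n,m]$), which vanishes in $M^{ab}$, so both sides map to $0$. Equivalently, $\tau$ is the functorial map $M\otimes M\to M^{ab}\otimes M^{ab}$ induced by the projection $M\to M^{ab}$ followed by the isomorphism of Proposition \ref{pro3333}. Computing $\tau\circ\psi$ on $B$ then yields $\gamma_{\bar 0}(x_i)\mapsto x_i\otimes_{mod}x_i$ for $i\in I_{\bar 0}$, and $b_\gamma(x_i,x_j)\mapsto x_i\otimes_{mod}x_j+(-1)^{|x_i||x_j|}x_j\otimes_{mod}x_i$ for $i<j$.

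Finally, since $M^{ab}$ is free, $M^{ab}\otimes_{mod}M^{ab}$ is free on $\{x_a\otimes_{mod}x_b\}_{a,b}$, and the displayed images are manifestly independent: each even diagonal $x_i\otimes_{mod}x_i$ is a single basis vector occurring in no other image, each off-diagonal symmetric combination ($i<j$) involves the two distinct basis vectors $x_i\otimes_{mod}x_j$ and $x_j\otimes_{mod}x_i$, and distinct index pairs use disjoint basis vectors. Reading off coefficients forces all of them to vanish, so $\tau\circ\psi$ is injective on $B$, whence $\psi$ is injective. The point demanding care — and the place where freeness is indispensable — is precisely this last step: the naive retraction $M\otimes M\to\Gamma(M^{ab})$, $m\otimes n\mapsto b_\gamma(\bar m,\bar n)$, satisfies only $\rho\circ\psi=2\cdot\mathrm{id}$, so it cannot be inverted over an arbitrary ring; routing through the free module $M^{ab}\otimes_{mod}M^{ab}$ and arguing by linear independence is what removes the factor of $2$.
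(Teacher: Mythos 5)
The paper contains no proof of this proposition to compare against: it is imported verbatim from \cite{Pilar} (Proposition 7.2.9 of P\'aez Guill\'an's thesis) and used as a black box. Judged on its own merits, your argument is correct and self-contained. The map $\psi$ carries the basis $\{\gamma_{\bar 0}(x_i)\}_{i\in I_{\bar 0}}\cup\{b_\gamma(x_i,x_j)\}_{i<j}$ of Proposition \ref{th3.3} to squares and symmetrized tensors; your $\tau$ is indeed well defined, since in relations (3) and (4) of the tensor product every relevant entry lies in $[M,M]$ (the self-action being ${}^{n}m=[n,m]$) and the target is abelian, so the bracket relation (4) is respected as well --- equivalently, $\tau$ is the functorial map induced by $M\to M^{ab}$ composed with the isomorphism of Proposition \ref{pro3333}; and the images $x_i\otimes_{mod}x_i$ and $x_i\otimes_{mod}x_j+(-1)^{|x_i||x_j|}x_j\otimes_{mod}x_i$ have pairwise disjoint supports in the basis $\{x_a\otimes_{mod}x_b\}$ of the free module $M^{ab}\otimes_{mod}M^{ab}$, so coefficient extraction annihilates any element of $\ker\psi$. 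Your closing remark is also the right diagnosis of where freeness matters: the retraction $\rho(m\otimes n)=b_\gamma(\bar m,\bar n)$ satisfies $\rho\circ\psi=2\,\mathrm{id}$ (using $b_\gamma(x_{\bar 0},x_{\bar 0})=2\gamma(x_{\bar 0})$ and the symmetry of $b_\gamma$), which already proves the proposition over this paper's ground field of characteristic $\neq 2,3$ --- where freeness of $M^{ab}$ is automatic --- while your linear-independence route is exactly what is needed for the ring-level generality in which \cite{Pilar} works. The one point you assert rather than verify is the explicit form of $\psi$ on generators, i.e.\ that $m\otimes m'+(-1)^{|m||m'|}m'\otimes m$ and $m_{\bar 0}\otimes m_{\bar 0}$ depend only on the classes of $m,m'$ modulo $M^2$; since the paper states the exact sequence \ref{eq31} with $\operatorname{Im}\psi=M\square M$ generated as in Lemma \ref{lem4b}, inheriting this from the cited sequence is legitimate, but a proof fully from scratch would need to check it.
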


\begin{corollary}\label{cor313}
Let $M$ be a Lie superalgebra such that $M^{ab}$ is a free supermodule. Then the following sequence is exact 
	$$ 0\longrightarrow \Gamma(M^{ab}) \overset{\psi}\longrightarrow M\otimes M \overset{\pi}\longrightarrow M \wedge M \longrightarrow 0.$$
	
\end{corollary}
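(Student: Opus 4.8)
The plan is to obtain the asserted short exact sequence by grafting a single zero onto the left end of the already-established three-term exact sequence (\ref{eq31}). The genuinely new ingredient is the injectivity of $\psi$, and this is furnished verbatim by Proposition \ref{th3.12} under the standing hypothesis that $M^{ab}$ is free; everything else amounts to bookkeeping of exactness positions.

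First I would recall that the proposition preceding this corollary already asserts exactness of
\[ \Gamma(M^{ab}) \overset{\psi}\longrightarrow M\otimes M \overset{\pi}\longrightarrow M\wedge M \longrightarrow 0. \]
By definition this encodes two facts: exactness at $M\wedge M$, i.e.\ $\pi$ is surjective (which is clear since $M\wedge M$ is by construction the quotient of $M\otimes M$ by $M\square M$), and exactness at $M\otimes M$, i.e.\ $\operatorname{Im}\psi=\ker\pi$. These two statements are precisely the exactness of the augmented sequence at its two rightmost nonzero terms, so no further argument is needed there.

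Next I would invoke Proposition \ref{th3.12}: because $M^{ab}$ is assumed to be a free supermodule, the map $\psi$ is injective, that is $\ker\psi=0$. This is exactly the assertion of exactness at the leftmost term $\Gamma(M^{ab})$ in the sequence $0\longrightarrow \Gamma(M^{ab})\longrightarrow\cdots$. Concatenating the three exactness statements---at $\Gamma(M^{ab})$, at $M\otimes M$, and at $M\wedge M$---yields the desired short exact sequence.

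The hard part is in fact already behind us, having been absorbed into Proposition \ref{th3.12}: the real work lies in showing $\ker\psi=0$, which rests on the explicit free basis of $\Gamma(M)$ provided by Proposition \ref{th3.3} together with the compatibility of that basis with the relations defining $\psi$. At the level of this corollary there is essentially no obstacle; the only point requiring care is to observe that the freeness of $M^{ab}$ is exactly the condition under which the universal quadratic functor $\Gamma$ admits such a basis, and hence the condition that makes $\psi$ injective.
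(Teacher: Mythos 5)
Your proposal is correct and follows exactly the route the paper intends: the corollary is an immediate consequence of splicing the injectivity of $\psi$ (Proposition \ref{th3.12}, which uses the freeness of $M^{ab}$) onto the left of the right-exact sequence (\ref{eq31}), which the paper indeed treats as requiring no further proof. Your additional remark correctly locates the real work in Proposition \ref{th3.12} via the free basis of $\Gamma(M)$ from Proposition \ref{th3.3}.
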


	

\section{Main results}
In this section, we show that any $(k \mid l)$-dimensional non-abelian nilpotent Lie superalgebra $L$ with derived supersubalgebra of dimension $(r \mid s)$ satisfies
 $\dim (L\otimes L) \leq (k+l-(r+s))(k+l-1)+2.$
In particular, for $r=1, s=0$, we determine the structure of $L$ when the equality holds.

\begin{lemma}\label{lemm4.2}
	For any two abelian Lie superalgebras $M$ and $N$ there is an isomorphism
	\[(M \oplus N) \square (M \oplus N) \cong (M \square M) \oplus (N\square N) \oplus (M \otimes N) .\]
\end{lemma}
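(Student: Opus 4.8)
The plan is to route everything through the universal quadratic functor $\Gamma$, which provides the precise link between $\square$ and the tensor square. The crucial observation is that for any Lie superalgebra $A$ whose abelianization $A^{ab}$ is a free supermodule, the submodule $A \square A$ is exactly the image of the map $\psi$ in the sequence \eqref{eq31}: since by definition $A \wedge A = (A \otimes A)/(A \square A)$, we have $A \square A = \ker \pi = \Ima \psi$. Because we work over a field, every supermodule is free, so Proposition \ref{th3.12} (equivalently Corollary \ref{cor313}) applies and $\psi$ is injective; hence $A \square A \cong \Gamma(A^{ab})$. When $A$ is abelian, $A^{ab} = A$ and this reads $A \square A \cong \Gamma(A)$.

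First I would apply this to $A = M \oplus N$, which is again abelian, to get $(M \oplus N)\square(M \oplus N) \cong \Gamma(M \oplus N)$. Next I would invoke Proposition \ref{pro310} to split $\Gamma(M \oplus N) \cong \Gamma(M) \oplus \Gamma(N) \oplus (M \otimes_{mod} N)$. Applying the identification of the first paragraph in reverse to the abelian algebras $M$ and $N$ turns the first two summands into $M \square M$ and $N \square N$. Finally, since $M$ and $N$ are abelian and, as summands of $M \oplus N$, act trivially on one another, Proposition \ref{pro3333} gives $M \otimes N \cong M^{ab} \otimes_{mod} N^{ab} = M \otimes_{mod} N$, so the last summand is exactly $M \otimes N$. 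Assembling the three identifications yields the claimed isomorphism.

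The only genuine content, and the step I would treat most carefully, is the identification $A \square A \cong \Gamma(A^{ab})$ together with the verification that the splitting of $\Gamma(M \oplus N)$ in Proposition \ref{pro310} is compatible with the inclusions $M, N \hookrightarrow M \oplus N$, so that the summands $\Gamma(M)$ and $\Gamma(N)$ really correspond to $M \square M$ and $N \square N$ sitting inside $(M\oplus N)\square(M\oplus N)$; this is where the naturality of the sequence \eqref{eq31} in $A$ is used. As a sanity check one can verify the statement directly: using Proposition \ref{prop4a} to decompose $(M \oplus N)\otimes(M\oplus N)$, the generators of $\square$ split into the symmetric combinations lying inside $M \otimes M$, those inside $N \otimes N$, and the mixed symmetrizations $a \otimes b + (-1)^{|a||b|} (b \otimes a)$ with $a \in M$, $b \in N$. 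The mixed part is the image of an injective symmetrization map out of $M \otimes_{mod} N$, and the generators $w_{\bar{0}} \otimes w_{\bar{0}}$ with $w \in (M\oplus N)_{\bar{0}}$ expand into terms already accounted for, contributing nothing new. This recovers the same three summands and confirms the directness of the sum.
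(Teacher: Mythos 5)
Your proof is correct, and its skeleton --- identifying $A \square A$ with $\Gamma(A)$ for abelian $A$, then splitting $\Gamma(M\oplus N)$ via Proposition \ref{pro310} and converting the mixed term with Proposition \ref{pro3333} --- is exactly the paper's, whose printed proof is simply ``follows from Proposition \ref{pro310} and Lemma \ref{lem41}.'' Where you genuinely differ is in how the key identification $A\square A\cong\Gamma(A^{ab})$ is obtained. The paper's Lemma \ref{lem41} proves it by induction on dimension, peeling off one copy of $A(1\mid 0)$ or $A(0\mid 1)$ at a time, and invokes Corollary \ref{cor313} only for the base case $m+n=1$; worse, the inductive step (recombining $A(m\mid n)\square A(m\mid n)\oplus A(1\mid 0)\square A(1\mid 0)\oplus A(m\mid n)\otimes A(1\mid 0)$ into $\bigl(A(m\mid n)\oplus A(1\mid 0)\bigr)\square\bigl(A(m\mid n)\oplus A(1\mid 0)\bigr)$) is precisely Lemma \ref{lemm4.2} itself, cited in the paper as ``Lemma \ref{lem41}'' --- so as printed the two lemmas cite each other. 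Your route repairs this: since the ground ring is a field, every supermodule is free, so Proposition \ref{th3.12}/Corollary \ref{cor313} applies to \emph{every} abelian superalgebra simultaneously and gives $A\square A=\ker\pi=\Ima\psi\cong\Gamma(A^{ab})$ in one stroke, with no induction and no circularity. That is a real tidying-up of the paper's argument, at the modest cost of losing validity over general rings where freeness can fail. One remark: your concern about compatibility of the splitting in Proposition \ref{pro310} with the inclusions $M,N\hookrightarrow M\oplus N$ is more care than the statement demands --- everything in sight is abelian (indeed by Lemma \ref{lem4b} the square is central in the tensor square), so the lemma asserts only an abstract isomorphism of supermodules, and composing your three isomorphisms already suffices; your direct sanity check via Proposition \ref{prop4a}, tracking the symmetric generators inside the four summands, is sound and would serve as a self-contained elementary proof if preferred.
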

\begin{proof}
	The proof follows from Proposition \ref{pro310} and Lemma \ref{lem41}.
\end{proof}

\begin{lemma}\label{lem41}
Let $M$ be be a finite dimension Lie superalgebra. Then \[\Gamma(M/M^2)\cong M/M^2\square M/M^2.\]
\end{lemma}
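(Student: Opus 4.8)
The plan is to deduce the isomorphism directly from Corollary \ref{cor313}, applied not to $M$ itself but to its abelianization. Set $A := M/M^2$. Since $M$ is finite dimensional, $A$ is a finite-dimensional abelian Lie superalgebra, and as a $\mathbb{Z}_2$-graded $\mathbb{F}$-vector space it is free; hence $A$ is a free supermodule. Being abelian, $A$ satisfies $A^2 = [A,A] = 0$, so its abelianization is $A^{ab} = A/A^2 = A$. Thus $A$ is a Lie superalgebra whose abelianization is free, which is precisely the hypothesis needed to invoke Corollary \ref{cor313}.

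Applying Corollary \ref{cor313} to $A$ produces the short exact sequence
\[ 0 \longrightarrow \Gamma(A^{ab}) \overset{\psi}\longrightarrow A\otimes A \overset{\pi}\longrightarrow A\wedge A \longrightarrow 0, \]
and, using $A^{ab}=A$, this reads $0 \to \Gamma(A) \overset{\psi}\to A\otimes A \overset{\pi}\to A\wedge A \to 0$. The map $\pi$ here is the canonical projection defining the exterior product, so by the very definition $A\wedge A = (A\otimes A)/(A\square A)$ we have $\ker\pi = A\square A$ as a graded submodule of $A\otimes A$. Exactness at $A\otimes A$ then gives $\Ima\psi = \ker\pi = A\square A$, while injectivity of $\psi$ (the content of Corollary \ref{cor313}) identifies $\Gamma(A)$ with its image. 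Composing these, $\Gamma(A)\cong \Ima\psi = A\square A$, that is $\Gamma(M/M^2)\cong (M/M^2)\square (M/M^2)$, as claimed.

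The argument has essentially no obstacle beyond confirming the two inputs to Corollary \ref{cor313} --- freeness of $A$ and the identity $A^{ab}=A$ --- both of which are immediate once one records that $A$ is abelian; the one genuinely conceptual point is recognizing that the corollary should be fed $A$ rather than $M$, and that the kernel of its second arrow is by definition the $\square$-submodule.

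As an independent sanity check one can compare dimensions. Writing $\dim A = (p\mid q)$ and using the explicit basis of $\Gamma(A)$ furnished by Proposition \ref{th3.3} (namely $\{\gamma_{\bar 0}(x_i)\}$ together with $\{b_\gamma(x_i,x_j)\}_{i<j}$), one finds that $\Gamma(A)$ has even part of dimension $\binom{p+1}{2}+\binom{q}{2}$ and odd part of dimension $pq$. On the other side, $A\otimes A\cong A\otimes_{mod}A$ by Proposition \ref{pro3333}, so $A\otimes A$ has even dimension $p^2+q^2$ and odd dimension $2pq$; subtracting the dimension of $A\wedge A$ (even part $\binom{p}{2}+\binom{q+1}{2}$, odd part $pq$) yields exactly the same value for $\dim(A\square A)$. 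This confirms that $\psi$ surjects onto all of $A\square A$ rather than a proper submodule, consistent with the isomorphism asserted.
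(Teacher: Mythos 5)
Your proof is correct, and it takes a genuinely more direct route than the paper's. You apply Corollary \ref{cor313} exactly once, to $A=M/M^2$: since the ground ring is a field, $A^{ab}=A$ is automatically a free supermodule, so $\psi$ is injective and $\Gamma(A)\cong\Ima\psi=\ker\pi=A\square A$, where the identification $\ker\pi=A\square A$ is just the definition of the exterior square. The paper instead argues by induction on $\dim(M/M^2)$: it invokes Corollary \ref{cor313} only for the one-dimensional base cases $A(1\mid 0)$ and $A(0\mid 1)$, then builds up $A(m\mid n)$ via the direct-sum decompositions $\Gamma(M\oplus N)\cong\Gamma(M)\oplus\Gamma(N)\oplus(M\otimes_{mod}N)$ (Proposition \ref{pro310}) and $(M\oplus N)\square(M\oplus N)\cong(M\square M)\oplus(N\square N)\oplus(M\otimes N)$ (Lemma \ref{lemm4.2}), together with Proposition \ref{pro3333}. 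Your key observation --- that the freeness hypothesis of Corollary \ref{cor313} holds for $A$ in every dimension over a field, not merely in dimension one --- collapses that induction entirely. It also sidesteps an awkwardness in the paper: the proofs of Lemma \ref{lemm4.2} and Lemma \ref{lem41} cite each other (the printed proof of Lemma \ref{lem41} even cites the lemma itself, presumably intending Lemma \ref{lemm4.2}), so as written the two arguments are circular unless one reads them as a simultaneous induction; your route needs neither decomposition lemma. The inductive scheme would only earn its keep over a ground ring where freeness of $A$ could fail, but the lemma is stated for finite-dimensional superalgebras over the field fixed in the preliminaries, so nothing is lost. Your closing dimension count is also accurate, with $\dim\Gamma(A)=\left(\binom{p+1}{2}+\binom{q}{2}\;\middle|\;pq\right)$ matching $\dim(A\otimes A)-\dim(A\wedge A)$, though it is logically redundant given the exactness argument.
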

\begin{proof}
From the exact Sequence \ref{eq31}  we have Im$\psi =$Ker $\pi = M\square M$. Now we shall prove that 
	\begin{equation}\label{eq4.1}
		\Gamma(A(m\mid n))\cong A(m\mid n)\square A(m\mid n)
	\end{equation}
Let $m+n=1$, then by Corollary \ref{cor313} we have 
	\[\Gamma(A(1\mid 0))\cong A(1\mid 0)\square A(1\mid 0)\] 	
	\[\Gamma(A(0\mid 1))\cong A(0\mid 1)\square A(0\mid 1)\]
Now assume that $m+n\geq 2$. Then from Proposition \ref{pro3333}, Lemma \ref{lem41} and using induction hypothesis we have 
	\begin{align*}
		\Gamma (A(m+1\mid n))&\cong \Gamma(A(m\mid n)\oplus A(1\mid 0)) 
		\\
		&\cong\Gamma(A(m\mid n))\oplus\Gamma(A(1\mid 0))\oplus A(m\mid n)\otimes A(1\mid 0)\\
		&\cong  A(m\mid n)\square A(m\mid n) \oplus  A(1\mid 0) \square A(1\mid 0)  \oplus A(m\mid n)\otimes A(1\mid 0)\\
		&\cong  (A(m\mid n)\oplus A(1\mid 0))\square (A(m\mid n)\oplus A(1\mid 0))\\
		&\cong  A(m+1\mid n)\square A(m+1\mid n).
	\end{align*}
Similarly, we can see that $\Gamma (A(m\mid n+1))\cong  A(m\mid n+1)\square A(m\mid n+1).$ Thus the Relation \ref{eq4.1} is true for all $m+n\geq1.$
\end{proof}

\begin{corollary}\label{cor4.3}
Let $M$ be a finite dimension Lie superalgebra, then
	\[M \square M\cong M/M^2\square M/M^2\cong \Gamma(M/M^2).\]	
\end{corollary}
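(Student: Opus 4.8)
The plan is to obtain both isomorphisms from the exact sequence \eqref{eq31} together with Lemma \ref{lem41}, exploiting the fact that over the base field $\mathbb{F}$ the abelianization $M/M^2 = M^{ab}$ of a finite-dimensional Lie superalgebra is automatically free as a supermodule. The rightmost isomorphism $M/M^2 \square M/M^2 \cong \Gamma(M/M^2)$ is precisely the content of Lemma \ref{lem41}, so no further argument is needed there; the real work lies in establishing the leftmost isomorphism $M \square M \cong M/M^2 \square M/M^2$, which I would route through $\Gamma(M/M^2)$ and then chain with Lemma \ref{lem41}.

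For the leftmost isomorphism I would first observe that $M^{ab} = M/M^2$ is a finite-dimensional $\mathbb{Z}_2$-graded vector space over $\mathbb{F}$, hence admits a homogeneous basis and is free as a supermodule. Thus the hypothesis of Proposition \ref{th3.12} (equivalently Corollary \ref{cor313}) holds and the map $\psi$ in \eqref{eq31} is injective. Next I would identify $M \square M$ with $\mathrm{Ker}\,\pi$: by the definition of the exterior product, $M \wedge M = (M \otimes M)/(M \square M)$ and $\pi$ is the canonical projection, so $\mathrm{Ker}\,\pi = M \square M$. Exactness of \eqref{eq31} then gives $M \square M = \mathrm{Ker}\,\pi = \mathrm{Im}\,\psi$, and since $\psi$ is injective it restricts to an isomorphism $\Gamma(M/M^2) \xrightarrow{\sim} \mathrm{Im}\,\psi = M \square M$. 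Composing this with Lemma \ref{lem41} yields the chain $M \square M \cong \Gamma(M/M^2) \cong M/M^2 \square M/M^2$, which is exactly the assertion.

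The only step requiring care is the injectivity of $\psi$, i.e.\ the verification of the freeness hypothesis of Proposition \ref{th3.12}; but since every finite-dimensional vector superspace over a field possesses a homogeneous basis, this freeness is automatic and imposes no genuine restriction. Consequently I do not anticipate a substantive obstacle: the proof reduces to correctly assembling the exact sequence \eqref{eq31}, the elementary identification $\mathrm{Ker}\,\pi = M \square M$ coming from the definition of $M \wedge M$, and the isomorphism already recorded in Lemma \ref{lem41}.
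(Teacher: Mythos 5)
Your proof is correct, but it follows a genuinely different route from the paper's. The paper never invokes the injectivity of $\psi$: its proof produces two epimorphisms in opposite directions --- first $M/M^2 \square M/M^2 \cong \Gamma(M/M^2) \twoheadrightarrow \operatorname{Im}\psi = M \square M$, obtained from Lemma \ref{lem41} together with the exact sequence (\ref{eq31}), and second the natural epimorphism $M \square M \longrightarrow M/M^2 \square M/M^2$ induced by the projection $M \otimes M \longrightarrow M/M^2 \otimes M/M^2$ --- and then concludes, implicitly using that mutual epimorphisms between finite-dimensional superspaces force equality of dimensions and hence isomorphisms. You instead note that $M^{ab}$ is free as a supermodule (homogeneous bases exist over the field $\mathbb{F}$), so Proposition \ref{th3.12} (equivalently Corollary \ref{cor313}) makes $\psi$ injective, giving $\Gamma(M/M^2) \cong \operatorname{Im}\psi = \operatorname{Ker}\pi = M \square M$ outright, after which the chain closes with Lemma \ref{lem41}. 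Your argument is the sharper one: it explains why the paper's first epimorphism is in fact an isomorphism, dispenses with the dimension-counting squeeze, and yields $M \square M \cong \Gamma(M/M^2)$ without any finiteness hypothesis (finite dimensionality then enters only through Lemma \ref{lem41}, whose inductive proof in the paper requires it). The paper's version, by contrast, gets by without the injectivity result from \cite{Pilar}, at the cost of relying on the finite-dimensional two-sided surjection argument.
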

\begin{proof}
From Lemma \ref{lem41} and the exact Sequence \ref{eq31}, we have the following epimorphism   $M/M^2\square M/M^2\longrightarrow M\square M$. Now by using  Lemma \ref{lem41} and the natural epimorphism  $M\square M\longrightarrow M/M^2\square M/M^2$ we get the desire result.
\end{proof}

\begin{proposition}\label{prop444}
For  $m+n\geq 2$, $H(m,n)\otimes H(m,n)\cong H(m,n)/H^2(m,n) \otimes H(m,n)/H^2(m,n).$
Moreover, $H(1, 0) \otimes H(1, 0) \cong A (6 \mid 0)$ and $H(0, 1) \otimes H(0, 1) \cong A(1 \mid 0)$.
\end{proposition}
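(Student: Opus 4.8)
The plan is to handle the generic range $m+n\geq 2$ and the two exceptional algebras $H(1,0)$, $H(0,1)$ separately, reducing each to a superdimension count. For $m+n\geq 2$ I would first apply Proposition~\ref{propp3.2} with $N=H^2(m,n)$. Since $H:=H(m,n)$ is Heisenberg, $H^2(m,n)=Z(H)=\langle z\rangle$, so the hypothesis $N\subseteq H^2\cap Z(H)$ holds trivially and the sequence $H^2\otimes H\to H\otimes H\to (H/H^2)\otimes(H/H^2)\to 0$ is exact. In particular the natural map $\pi\colon H\otimes H\to (H/H^2)\otimes(H/H^2)$ is surjective, and since every space in sight is finite dimensional it suffices to show that its source and target have the same superdimension; equality then forces $\pi$ to be an isomorphism.

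Next I would compute both superdimensions. The target is easy: $H/H^2\cong A(2m\mid n)$ is abelian and acts trivially on itself, so Proposition~\ref{pro3333} identifies $(H/H^2)\otimes(H/H^2)$ with the module tensor square $A(2m\mid n)\otimes_{mod}A(2m\mid n)$, of superdimension $(4m^2+n^2\mid 4mn)$. For the source I would invoke the exact sequence of Corollary~\ref{cor313}, which applies because $H^{ab}$ is finite dimensional and hence free, giving $\dim(H\otimes H)=\dim\Gamma(H^{ab})+\dim(H\wedge H)$. The first summand is read off from the explicit homogeneous basis of Proposition~\ref{th3.3} applied to $H^{ab}=A(2m\mid n)$, yielding $\dim\Gamma(A(2m\mid n))=(2m^2+m+\binom{n}{2}\mid 2mn)$. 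For the second summand I would use the standard commutator sequence $0\to\mathcal M(H)\to H\wedge H\to H^2\to 0$, so that $\dim(H\wedge H)=\dim\mathcal M(H)+\dim H^2$, with $\dim\mathcal M(H)$ taken from Theorem~\ref{th3.4} and $\dim H^2=(1\mid 0)$. Adding the two contributions reproduces $(4m^2+n^2\mid 4mn)$ in both the even and the odd degrees, matching the target, and hence $\pi$ is an isomorphism.

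For the two exceptional algebras the first isomorphism genuinely fails, because the multiplier formula in Theorem~\ref{th3.4} is different there, so I would argue directly from the tensor relations $(1)$--$(4)$. For $H(1,0)=\langle x_1,x_2,z\mid [x_1,x_2]=z\rangle$ the same bookkeeping, now with $\dim\mathcal M(H(1,0))=(2\mid 0)$, gives $\dim(H(1,0)\otimes H(1,0))=(6\mid 0)$; to identify this with $A(6\mid 0)$ I would note that relation $(4)$ makes every bracket $[m\otimes n,\,m'\otimes n']$ a scalar multiple of $z\otimes z$, and that $z\otimes z=[x_1,x_2]\otimes z=0$ by relation $(3)$ since $z$ is central, so the generators bracket to zero and the tensor square is abelian. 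For $H(0,1)=\langle y_1,z\mid [y_1,y_1]=z\rangle$ the count gives superdimension $(1\mid 0)$. Here relation $(3)$ forces $y_1\otimes z=2\,(z\otimes y_1)$ and $z\otimes y_1=2\,(y_1\otimes z)$, whence $3\,(y_1\otimes z)=0$; as the ground field has characteristic different from $3$ this yields $y_1\otimes z=z\otimes y_1=0$, and together with $z\otimes z=0$ the only surviving generator is the even element $y_1\otimes y_1$, giving $H(0,1)\otimes H(0,1)\cong A(1\mid 0)$.

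The main obstacle is the generic superdimension count: one must correctly assemble the $\Gamma$-functor basis of Proposition~\ref{th3.3} with the exterior-square/multiplier sequence and verify that the even and odd parts \emph{separately} reproduce $(4m^2+n^2\mid 4mn)$. The exceptional cases are then routine, the only delicate point being the characteristic hypothesis, which is precisely what collapses $y_1\otimes z$ to zero in the $H(0,1)$ computation.
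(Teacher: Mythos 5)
Your proposal is correct and takes essentially the same route as the paper: both compute $\dim\bigl(H(m,n)\otimes H(m,n)\bigr)=\dim\Gamma(H^{ab})+\dim\mathcal{M}(H(m,n))+\dim H^2(m,n)=(4m^2+n^2\mid 4mn)$ using Corollary \ref{cor313}, Proposition \ref{th3.3} and Theorem \ref{th3.4}, match it against the abelianized tensor square, and conclude via the natural epimorphism. Your explicit handling of the exceptional cases --- abelianness from relation $(4)$ together with $z\otimes z=0$, and the collapse $3(y_1\otimes z)=0$ via $\operatorname{char}\mathbb{F}\neq 3$ for $H(0,1)$ --- simply supplies the details the paper leaves to the reader with ``similarly one can see.''
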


\begin{proof}
Suppose that $m+n\geq 2$. From Theorem \ref{th3.3} and Lemma \ref{lem41}, we have \[\dim (H(m,n)\square H(m,n))=\dim \Gamma(H(m,n)/ H^2(m,n))=(2m^2+m +\frac{n(n-1)}{2}\mid 2mn),\] 
and by Theorem \ref{th3.4} we have $\dim \mathcal{M}(H(m,n))=(2m^{2}-m+n(n+1)/2-1 \mid 2mn)$. Then 
\begin{align*}
	\dim (H(m,n)\otimes H(m,n)) & 
	=\dim (H(m,n)\square H(m,n))+ \dim \mathcal{M}(H(m,n))+\dim H^2(m,n)\\
	&=(2m^2+m+\frac{n(n-1)}{2} \mid 2mn)+(2m^{2}-m+n(n+1)/2-1 \mid 2mn)+(1 \mid 0)\\
	&=(4m^2+n^2 \mid 4mn)\\
	&=4m^2+4mn+n^2.
\end{align*}
On the other hand, $\dim H(m,n)/H^2(m,n) \otimes H(m,n)/H^2(m,n)=(2m+n)^2= 4m^2+4mn+n^2$. Now the result can be obtained by the natural epimorphism $H(m,n)\otimes H(m,n)\longrightarrow H(m,n)H^2(m,n) \otimes H(m,n)H^2(m,n)$.

\smallskip

Similarly one can see that $H(1, 0) \otimes H(1, 0)$, (and $H(0, 1) \otimes H(0, 1)$ ) is Lie superalgebra with dimension $(6 \mid 0)$, (resp. of dimension $(1 \mid 0)$). One can easily check that $H(1, 0) \otimes H(1, 0)$ and $H(0, 1) \otimes H(0, 1)$ are abelian too.
\end{proof}
Now we state the same result for a Heisenberg Lie superalgebra with odd center, as the proof is quite similar to the Proposition \ref{prop444}, so we omit it.

\begin{proposition}\label{prop45}
When $m\geq 2, ~H_m\otimes H_m\cong H_m/H^2_m \otimes H_m/H^2_m$. Moreover, $H_1 \otimes H_1 \cong A (2 \mid 3)$.
\end{proposition}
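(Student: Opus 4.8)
The plan is to mirror the proof of Proposition \ref{prop444} almost verbatim, since the odd-center Heisenberg superalgebra $H_m$ has the same structural features that made the even-center argument work: a one-dimensional derived subalgebra coinciding with the center, so that $H_m^2 = Z(H_m)$ is one-dimensional and contained in $H_m^2 \cap Z(H_m)$. First, for $m \geq 2$, I would compute $\dim(H_m \otimes H_m)$ via the decomposition furnished by Corollary \ref{cor313}, namely the exact sequence $0 \to \Gamma(H_m^{ab}) \to H_m \otimes H_m \to H_m \wedge H_m \to 0$, which gives $\dim(H_m \otimes H_m) = \dim(H_m \square H_m) + \dim(H_m \wedge H_m)$. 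Here $\dim(H_m \wedge H_m) = \dim \mathcal{M}(H_m) + \dim H_m^2$, relating the exterior square to the Schur multiplier plus the derived subalgebra.

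The two numerical inputs are then supplied by earlier results. From Theorem \ref{th3.3} and Lemma \ref{lem41} I would compute $\dim(H_m \square H_m) = \dim \Gamma(H_m/H_m^2)$; since $H_m/H_m^2 \cong A(m \mid m+1)$ (the quotient by the one-dimensional even center), this is the dimension of the universal quadratic functor on an abelian superalgebra of superdimension $(m \mid m+1)$, which Proposition \ref{th3.3} evaluates explicitly in terms of $\gamma_{\bar 0}$-generators and $b_\gamma$-generators. From Theorem \ref{th3.6} I would read off $\dim \mathcal{M}(H_m) = (m^2 \mid m^2-1)$ for $m \geq 2$, and $\dim H_m^2 = (1 \mid 0)$. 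Adding the three contributions should produce a total whose even and odd parts sum to $(2m + (m+1))^2 = (3m+1)^2$, matching $\dim\big(H_m/H_m^2 \otimes H_m/H_m^2\big)$ computed via Proposition \ref{pro3333} as $(m + (m+1))^2$ — one must be careful that both sides use the same superdimension count. The isomorphism for $m \geq 2$ then follows from the natural epimorphism $H_m \otimes H_m \to H_m/H_m^2 \otimes H_m/H_m^2$ (induced by Proposition \ref{propp3.2}, since $H_m^2 \subseteq H_m^2 \cap Z(H_m)$), because a surjection between equidimensional spaces is an isomorphism.

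For the base case $m = 1$, where $H_1 = \langle x_1, y_1, z \mid [x_1,y_1]=z\rangle$ has $\dim \mathcal{M}(H_1) = (1 \mid 1)$ by Theorem \ref{th3.6}, the exact-sequence collapse used for $m \geq 2$ need not hold, so I would instead compute $\dim(H_1 \otimes H_1)$ directly from the defining relations of the tensor product, assembling $\dim(H_1 \square H_1)$, $\dim \mathcal{M}(H_1)$, and $\dim H_1^2$ to obtain superdimension $(2 \mid 3)$, and then verify that the resulting $5$-dimensional superalgebra is abelian by checking that all brackets $[m \otimes n, m' \otimes n'] = -(-1)^{|m||n|}({}^n m \otimes {}^{m'} n')$ vanish — which they do because the iterated actions land in the center and are killed.

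The main obstacle I anticipate is the careful bookkeeping of the $\mathbb{Z}_2$-grading throughout the dimension count: the universal quadratic functor treats even and odd generators asymmetrically (the $\gamma_{\bar 0}$ piece contributes only in even degree, and $b_\gamma(x_i,x_j)$ with $i<j$ must respect the ordering convention of Proposition \ref{th3.3} in which even basis elements precede odd ones), so getting the split of $\dim(H_m \square H_m)$ into its even and odd parts right is where an error would most likely creep in. Since the statement asserts the authors omit the proof as being parallel to Proposition \ref{prop444}, the substantive content is precisely this verification that the even-center computation transfers to the odd-center setting with the correct superdimensions, and that the anomalous $m=1$ case yields exactly $A(2 \mid 3)$.
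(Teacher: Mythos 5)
Your strategy is exactly the paper's: the authors omit the proof precisely because it repeats Proposition \ref{prop444} --- compute $\dim(H_m\otimes H_m)=\dim(H_m\square H_m)+\dim\mathcal{M}(H_m)+\dim H_m^2$ via Corollary \ref{cor313} and Lemma \ref{lem41}, compare with $\dim\bigl(H_m/H_m^2\otimes_{mod} H_m/H_m^2\bigr)$, and conclude by the natural epimorphism. But your execution contains a genuine error at exactly the point you yourself flagged as the most delicate, namely the $\mathbb{Z}_2$-bookkeeping. By Theorem \ref{th3.6}, $\dim H_m=(m\mid m+1)$ with $(H_m)_{\bar 0}=\langle x_1,\dots,x_m\rangle$ and $(H_m)_{\bar 1}=\langle y_1,\dots,y_m,z\rangle$; the relation $[x_j,y_j]=z$ forces $z$ to be \emph{odd}, so the center is odd (this is the whole point of the odd-center case), not ``one-dimensional even'' as you write. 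Consequently $H_m/H_m^2\cong A(m\mid m)$, not $A(m\mid m+1)$. Both of your candidate totals, $(3m+1)^2$ and $(2m+1)^2$, are therefore wrong (and mutually inconsistent --- you noticed the tension but resolved it in neither direction). The correct count is: $\dim\Gamma(A(m\mid m))=(m^2\mid m^2)$ by Proposition \ref{th3.3} (namely $m$ generators $\gamma_{\bar 0}(x_i)$ plus $2\binom{m}{2}$ even and $m^2$ odd generators $b_\gamma(x_i,x_j)$), so for $m\geq 2$,
\[
\dim(H_m\otimes H_m)=(m^2\mid m^2)+(m^2\mid m^2-1)+(0\mid 1)=(2m^2\mid 2m^2),
\]
total $4m^2=(2m)^2=\dim\bigl(H_m/H_m^2\otimes_{mod}H_m/H_m^2\bigr)$, and only then does the epimorphism-between-equidimensional-spaces argument close. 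With your $A(m\mid m+1)$ the two sides would fail to be equidimensional and the proof would collapse.

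Two smaller corrections. First, your worry that ``the exact-sequence collapse need not hold'' at $m=1$ is misplaced: Corollary \ref{cor313} and the three-term dimension formula are valid for all $m$ (the abelianization is always a free supermodule here); what fails at $m=1$ is only the numerical coincidence, since $(1\mid 1)+(1\mid 1)+(0\mid 1)=(2\mid 3)$ has total dimension $5\neq 4=\dim\bigl(H_1^{ab}\otimes_{mod}H_1^{ab}\bigr)$ --- exactly parallel to $H(1,0)$ in Proposition \ref{prop444}. Your assembled answer $(2\mid 3)$ is correct, but for the right reason it should come from the same formula, not from a separate direct computation. Second, your verification of abelianness for $H_1\otimes H_1$ via the bracket relation $[m\otimes n,m'\otimes n']=-(-1)^{|m||n|}({}^{n}m\otimes{}^{m'}n')$ is the right idea and matches the paper's (unelaborated) ``one can easily check'': since ${}^{n}m\in H_1^2=Z(H_1)$, the element ${}^{m'}n'$ pairs with a central element and the generator relations kill the result.
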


\begin{theorem}
Let $L$ be an $(k \mid l)$-dimensional non-abelian nilpotent Lie superalgebra
with derived supersubalgebra of dimension $(r \mid s)$. Then
	   \[ \dim (L\otimes L) \leq (k+l-(r+s))(k+l-1)+2. \]
In particular, for $r=1, s=0$ the equality holds if and only if $L\cong H(1,0)\oplus A(k-3 \mid l)$.
\end{theorem}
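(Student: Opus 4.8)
The plan is to establish the inequality by induction on $r+s=\dim L^{2}$, and then to read off the equality case directly, since the hypothesis $r=1,s=0$ places us exactly at the base of this induction. Because $L$ is nilpotent, $L^{2}\cap Z(L)\neq 0$, and as this intersection is graded I can choose inside it a one-dimensional homogeneous ideal $N$. Being central, $N$ and $L$ act trivially on each other, so Proposition~\ref{pro3333} gives $N\otimes L\cong N\otimes_{mod}L^{ab}$ and hence $\dim(N\otimes L)=\dim N\cdot\dim L^{ab}=(k+l)-(r+s)$ as a total dimension; only this total enters, so it is irrelevant whether $N$ is even or odd. Feeding $N$ into the exact sequence of Proposition~\ref{propp3.2} yields
\[
\dim(L\otimes L)\le \dim(N\otimes L)+\dim\big((L/N)\otimes(L/N)\big).
\]

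Second, I would run the inductive step under the assumption $r+s\ge 2$. Here $L/N$ is still non-abelian, with $\dim(L/N)=(k+l)-1$ and $\dim(L/N)^{2}=(r+s)-1$, so the induction hypothesis bounds the last term by $\big((k+l)-(r+s)\big)\big((k+l)-2\big)+2$. Adding $\dim(N\otimes L)=(k+l)-(r+s)$ and collecting the common factor $(k+l)-(r+s)$ produces exactly $\big((k+l)-(r+s)\big)\big((k+l)-1\big)+2$, closing the induction.

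Third comes the base case $r+s=1$, which also supports the equality analysis. Since $\dim L^{2}=1$ and $L^{3}=[L,L^{2}]\subsetneq L^{2}$, one gets $L^{3}=0$, i.e. $L^{2}\subseteq Z(L)$; the bracket then descends to a non-degenerate graded-skew form on $L/Z(L)$ (symplectic on the even part, symmetric on the odd part, the two being mutually orthogonal). Classifying this form identifies $L$ with $H\oplus A$, where $H$ is a Heisenberg Lie superalgebra and $A$ is abelian. This structural reduction is the step I expect to be the main obstacle, as it rests on the classification of nilpotent Lie superalgebras with one-dimensional derived subalgebra. Granting it, Proposition~\ref{prop4a} decomposes $L\otimes L$ into a Heisenberg block, two mixed blocks, and an abelian block; Propositions~\ref{prop444} and \ref{prop45} evaluate the first, while the trivial-action isomorphism of Proposition~\ref{pro3333} evaluates the others, and summing confirms the bound $((k+l)-1)^{2}+2$ in every case.

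Finally, for the equality statement I specialize to $r=1,s=0$, so $L^{2}$ is even and $L\cong H(m,n)\oplus A(p\mid q)$ with $H(m,n)$ of even centre. Setting $N_{0}:=2m+n$ (the total dimension of $H(m,n)^{ab}$) and $w:=p+q$, the four-block count gives $\dim(L\otimes L)=\dim\big(H(m,n)\otimes H(m,n)\big)+2N_{0}w+w^{2}$, where $N_{0}+w=(k+l)-1$. By Proposition~\ref{prop444} one has $\dim\big(H(m,n)\otimes H(m,n)\big)=N_{0}^{2}$ whenever $m+n\ge 2$ or $(m,n)=(0,1)$, forcing $\dim(L\otimes L)=((k+l)-1)^{2}$, strictly below the bound. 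The sole exception is $(m,n)=(1,0)$, where $\dim\big(H(1,0)\otimes H(1,0)\big)=6=N_{0}^{2}+2$ and $\dim(L\otimes L)=((k+l)-1)^{2}+2$. Hence equality holds if and only if $H\cong H(1,0)$, that is $L\cong H(1,0)\oplus A(k-3\mid l)$.
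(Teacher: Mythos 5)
Your proposal is correct and follows essentially the same route as the paper: induction on $\dim L^2$ via a one-dimensional homogeneous central ideal $N\subseteq L^2\cap Z(L)$ and the exact sequence of Proposition \ref{propp3.2}, with the base case $r+s=1$ handled by the classification $L\cong H\oplus A$, the direct-sum decomposition of Proposition \ref{prop4a}, and the dimension counts of Propositions \ref{pro3333}, \ref{prop444} and \ref{prop45}, isolating $(m,n)=(1,0)$ as the unique equality case. The only cosmetic difference is that you sketch the structural classification through the nondegenerate graded-skew form on $L/Z(L)$, whereas the paper simply cites it from \cite[Proposition 3.4]{SN2018b}.
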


\begin{proof}
First, for $r+s=1$, we have the following cases 
     \begin{enumerate}
     	\item $r=1,~ s=0;$ 
     	\item $r=0,~ s=1.$
     \end{enumerate} 
(1) Let $r=1,~ s=0$, then from \cite[Proposition 3.4]{SN2018b}, we have $L \cong H(m,n)\oplus A(k-2m-1 \mid l-n)$ for $m+n\geq 1$. Thus by Proposition \ref{prop4a}, 
$$\dim (H(m,n)\oplus A(k-2m-1 \mid l-n)) \otimes (H(m,n)\oplus A(k-2m-1 \mid l-n)) = \dim (H(m,n) \otimes H(m,n)) $$   $$+ 2\dim (H(m,n)\otimes_{mod} A(k-2m-1 \mid l-n))  + \dim(A(k-2m-1 \mid l-n) \otimes A(k-2m-1 \mid l-n)).$$
From Proposition \ref{pro3333},
 \[(H(m,n)\otimes A(k-2m-1 \mid l-n))\cong (H(m,n)/H^2(m,n)\otimes_{mod} A(k-2m-1 \mid l-n)).\]
Now Consider the following cases:\\

(i) When $m=1,n=0$, then by using Proposition \ref{prop444},  
\begin{align*}
  \dim (H(1,0)\oplus A(k-3 \mid l)) \otimes (H(1,0)\oplus A(k-3 \mid l))
          & =(6 \mid 0) + 2(2\mid 0)(k-3 \mid l) + (k-3\mid l)^2\\
          &  =(k+l-1)^2+2.\\
 \end{align*}

(ii) When $m=0, n=1$,  then again from Proposition \ref{prop444},
  \begin{align*}
 	\dim (H(0,1)\oplus A(k-1 \mid l-1)) \otimes (H(0,1)\oplus A(k-1 \mid l-1))
 	& =(1 \mid 0) + 2(0\mid 1)(k-1 \mid l-1) + (k-1\mid l-1)^2\\
 	&  =(k+l-1)^2.\\
 \end{align*}
(iii) When $m+n\geq 2$, then from Proposition \ref{prop444},  
\begin{align*}
	\dim (H(m,n)\oplus & A(k-2m-1 \mid l-n)) \otimes (H(m,n)\oplus A(k-2m-1 \mid l-n))\\
	& =(4m^2+n^2 \mid 4mn)+2(2m\mid n)(k-2m-1 \mid l-n) +(k-2m-1 \mid l-n)^2 \\
	&  =(k+l-1)^2.\\
\end{align*}
(2) Similarly for $r=0, s=1$, we can find that 
$L \cong H_{m} \oplus A(k-m \mid l-m-1)$ and  $ \dim (L\otimes L) \leq (k+l-1)^2+2. $\\
\smallskip

Let $r+s\geq2$ and assume that the result is true for $r+s-1$. Then for a graded ideal $N$ contained in $Z(L) \cap L^2$ with $\dim N=(1\mid 0)$ (or $\dim N=(0\mid 1)$), we have by  induction hypothesis and Proposition \ref{propp3.2}, 
\[\dim (L\otimes L) \leq  \dim (L/N\otimes L/N) + \dim (L\otimes N ),\]
and 
\[\dim (L/N\otimes L/N)\leq (k+l-1-(r+s-1))(k+l-2)+2.\]
Now from Proposition \ref{pro3333} $$\dim (L\otimes N )=\dim (L/L^2\otimes_{mod} N).$$
Therefore, 
   \[ \dim (L\otimes L) \leq (k+l-(r+s)) + (k+l-(r+s))(k+l-2)+2\]
   \[=(k+l-(r+s))(k+l-1)+2. \]

\end{proof}


\begin{thebibliography}{1}\label{reference}


\bibitem{Ellis1987}
G. Ellis,
\newblock Non-abelian exterior products of Lie algebras and an exact sequence in the homology of Lie algebras,
\newblock {\em J. Pure Appl. Algebra} \textbf{46}(1987), 111-115.


\bibitem{Ellis1991}
G. Ellis,
\newblock  A non-abelian tensor products of Lie algebras,
\newblock {\em Glasg. Math. J} \textbf{33}(1991), 101-120.


\bibitem{GKL2015} 
X. Garc\'{i}a-Mart\'{i}nez, E. Khmaladze, M. Ladra, 
\newblock Non-abelian tensor product and homology of Lie superalgebras,
\newblock {\em J. Algebra} \textbf{440}(2015), 464-488.


\bibitem{p55} 
I. Y. Hasan, R. N. Padhan, M. Das,   
\newblock { Detecting capable pairs of some nilpotent Lie superalgebras},
\newblock Preprint, 2022, arXiv.2207.11445.

\bibitem{Kac1977}
V. G. Kac, 
\newblock {Lie superalgebras},
\newblock{\em Adv. Math} \textbf{ 26}(1977), 8-96.








\bibitem{Musson2012}
I. M. Musson,  
\newblock {\em Lie  superalgebras  and  enveloping  algebras},
\newblock  Amer. Math. Soc., Providence, Rhode Island, 2012.






\bibitem{Nayak2018} 
S. Nayak, 
\newblock Multipliers of nilpotent Lie superalgebras,
\newblock {\em Comm. Algebra} \textbf{47}(2019), 689-705. 





\bibitem{SN2018b}
S. Nayak, 
\newblock Classification of finite dimensional nilpotent Lie superalgebras by their multiplier,
\newblock {\em J. Lie Theory} \textbf{31}(2021), 439-458.







\bibitem{p2010} 
P. Niroomand,
\newblock {Non abelian tensor square of non abelian prime power groups},
\newblock 	preprint, 2010, arXiv:1012.3738.





\bibitem{Niroomand}
P. Niroomand,
\newblock { On the tensor square of non-abelian nilpotent finite-dimensional Lie algebras},
\newblock {\em Linear Multilinear Algebra}, \textbf{59 }(2011), 387-412.



\bibitem{p54} 
 R. N. Padhan, K. C. Pati,   
\newblock { On $2$-nilpotent multiplier of Lie superalgebras},
\newblock preprint, 2020, arXiv.2006.10970.










\bibitem{p50}
R. N. Padhan, S. Nayak,
\newblock  On capability and the Schur multipliers of some nilpotent Lie superalgebras,
\newblock {\em Linear Multilinear Algebra}, \textbf{70}(2022), 1467-1478. 











\bibitem{Padhandetec}
R. N. Padhan, S. Nayak, K. C. Pati, 
\newblock  Detecting Capable Lie superalgebras,
\newblock {\em Comm. Algebra} \textbf{49}(2021), 4274-4290.










\bibitem{Pilar}
M. P. Páez Guillán, 
\newblock { Restricted Lie (super) algebras, central extensions of non-associative algebras and some tapas},
\newblock PhD dissertation, 2021.







\bibitem{Rocco}
N. R. Rocco, 
\newblock {On a construction related to the non-abelian tensor square of a group},
\newblock {\em Boletim da Sociedade Brasileira de Matemática-Bulletin/Brazilian Math. Soc.} \textbf{22}(1938), 387-412.


\bibitem{MC2011}
M. C. Rodríguez-Vallarte, G. Salgado, O. A. Sánchez-Valenzuela, 
\newblock {Heisenberg Lie superalgebras and their invariant superorthogonal and supersympletic forms},
\newblock {\em J. Algebra}, \textbf{332}(2011), 71-86.




\bibitem{hesam}
H. Safa,  
\newblock On multipliers of pairs of Lie superalgebras,
\newblock {\em Quaestiones Mathematicae} 1-12 (2021).



\bibitem{s2010} 
A. R. Salemkar, H. Tavallaee, H. Mohammadzadeh, B. Edalatzadeh,
 \newblock { Non-abelian tensor product of Lie algebras},
 \newblock{\em Linear Multilinear Algebra} \textbf{58}(2010), 333–341.









\end{thebibliography}
\end{document}